\crefname{theorem}{Theorem}{Theorems}
\crefname{proposition}{Proposition}{Propositions}
\crefname{observation}{Observation}{Observations}
\crefname{lemma}{Lemma}{Lemmas}
\crefname{claim}{Claim}{Claims}
\crefname{problem}{Problem}{Problems}
\crefname{conjecture}{Conjecture}{Conjectures}
\crefname{question}{Question}{Questions}
\crefname{example}{Example}{Examples}
\crefname{fact}{Fact}{Facts}
\newcounter{dummy}
\newcommand\myitem[1][]{\item[#1]\refstepcounter{dummy}\def\@currentlabel{#1}}
\newsavebox{\measure@tikzpicture}
	\def\tikz@width{#1}%
\DeclareSymbolFontAlphabet{\mathbb}{AMSb}
\newcommand{\thistheoremname}{}
\newtheorem*{genericthm*}{\thistheoremname}
\newenvironment{namedthm*}[1]
{\renewcommand{\thistheoremname}{#1}%
	\begin{genericthm*}}
	{\end{genericthm*}}
\newcommand{\Bairespace}[1][]{
	\ifthenelse{\equal{#1}{}}{\functions{\N}{\N}}{\functions{#1}{\N}}
}
\newcommand{\bbL}{\mathbb{L}}
\newcommand{\bbX}{\mathbb{X}}
\newcommand{\Cantorspace}[1][]{
	\ifthenelse{\equal{#1}{}}{\functions{\N}{2}}{\functions{#1}{2}}
}
\newcommandx{\concatenation}[2][1 = undefined, 2 = undefined]{
	\ifthenelse{\equal{#1}{undefined}}{{}\smallfrown}{
		\ifthenelse{\equal{#2}{undefined}}{\bigoplus #1}{\bigoplus_{#1} #2}
	}
}
\newcommandx{\functions}[3][3 =]{
	\ifthenelse{\equal{#3}{}}{#2^{#1}}{#2_{#3}^{#1}}
}
\newcommand{\Gzero}[1][]{
	\ifthenelse{\equal{#1}{}}
	{\mathbb{G}_0}
	{\mathbb{G}_{0,n}}
}
\newcommandx{\Hzero}[2][2 = undefined]{
	\ifthenelse{\equal{#2}{undefined}}
	{\mathbb{H}_{#1}}
	{\mathbb{H}_{#1, #2}}
}
\newcommandx{\intersection}[2][1 =, 2 =]{
	\ifthenelse{\equal{#1}{}}{\cap}{
		\ifthenelse{\equal{#2}{}}{\bigcap #1}{{\bigcap_{#1} #2}}
	}
}
\newcommand{\Lzero}[1][]{\ifthenelse{\equal{#1}{}}{\bbL_0}{L_{0, #1}}}
\newcommand{\Lzerospace}[1][]{\ifthenelse{\equal{#1}{}}{\bbX_0}{X_{0, #1}}}
\newcommand{\modulo}[1]{\ (\text{mod } 2)}
\newcommand{\N}{\mathbb{N}}
\newcommandx{\product}[2][1 =, 2 =]{
	\ifthenelse{\equal{#1}{}}{\times}{
		\ifthenelse{\equal{#2}{}}{\prod #1}{{\prod_{#1} #2}}
	}
}
\newcommandx{\sequence}[2][2 = undefined]{
	\ifthenelse{\equal{#2}{undefined}}{(#1)}{
		(#1)_{#2}
	}
}
\newcommandx{\set}[2][2 = undefined]{
	\ifthenelse{\equal{#2}{undefined}}{\{ #1 \}}{
		\{ #1 \suchthat #2 \}
	}
}
\newcommandx{\sets}[3][3 =]{
	\ifthenelse{\equal{#3}{}}{[#2]^{#1}}{[#2]^{#1}_{#3}}
}
\newcommand{\suchthat}{\mid}
\renewcommand{\restriction}[2]{#1 \upharpoonright #2}
\newcommandx{\union}[2][1 =, 2 =]{
	\ifthenelse{\equal{#1}{}}{\cup}{
		\ifthenelse{\equal{#2}{}}{\bigcup #1}{{\bigcup_{#1} #2}}
	}
}
\newtheorem{theorem}{Theorem}[section]
\newtheorem{lemma}[theorem]{Lemma}
\newtheorem{claim}[theorem]{Claim}
\newtheorem{subclaim}[theorem]{Subclaim}
\newtheorem{corollary}[theorem]{Corollary}
\newtheorem{proposition}[theorem]{Proposition}
\newtheorem{conjecture}[theorem]{Conjecture}
\newtheorem{problem}[theorem]{Problem}
\theoremstyle{definition}
\newtheorem{definition}[theorem]{Definition}
\numberwithin{equation}{section}
\newcommand{\oom}{\mathbb{N}^\mathbb{N}}
\newcommand{\bd}{\begin{definition}}
	\newcommand{\ed}{\end{definition}}
\DeclareMathOperator{\ran}{ran}
\DeclareMathOperator{\dist}{dist}
\DeclareMathOperator{\didistance}{didist}
\newcommand{\mc}{\mathcal}
\newcommand{\mb}{\mathbf}
\newcommand{\bs}{\mathbf{\Sigma}^1_1}
\newcommand{\bbo}{\mathbf{\Delta}^1_1}
\newcommand{\bp}{\mathbf{\Pi}^1_1}
\newcommand{\distance}[3]{\ifthenelse{\isempty{#3}}{\dist(#1,#2)}{\dist^{#3}(#1,#2)}}
\newcommand{\didist}[3]{\ifthenelse{\isempty{#3}}{\didistance(#1,#2)}{\didistance^{#3}(#1,#2)}}
\newcommand{\digraph}[3]{\ifthenelse{\equal{#1}{b}}{\mathbb{#2}_{#3}}
	{{#2}_{#3}}}
\newcommand{\linegraph}[3]{\ifthenelse{\equal{#1}{b}}{\mathbb{#2}_{#3}}
	{#2_{#3}}}
\newcommand{\underlyingspace}[3]{\ifthenelse{\equal{#1}{b}}{\mathbb{#2}_{#3}}
	{#2_{#3}}}
\newcommand{\distanceset}[2]{\ifthenelse{\isempty{#2}}{D(#1)}{D^{#2}(#1)}}
\newcommand{\concatt}{%
	\mathbin{\raisebox{1ex}{\scalebox{.7}{$\frown$}}}%
}
\begin{document}

	\thanks{}
	
	\keywords{}
	
%	\subjclass[2020]{Primary 03E25, Secondary 68Q17}
	
	\title[]{Complexity of Linear Equations and Infinite Gadgets}
	
\author{Jan Greb\'ik}

\address{Faculty of Informatics, Masaryk University, Botanicka 68A, 60200 Brno, Czech Republic}

\author{Zolt\'an Vidny\'anszky}
\address{E\"otv\"os Lor\'and University, Institute of Mathematics, P\'azm\'any P\'eter stny. 1/C, 1117 Budapest, Hungary}

	%	\urladdr{
	
	%	}
	
	\maketitle
	
	\begin{abstract}

    We investigate the descriptive set-theoretic complexity of the solvability of a Borel family of linear equations over a finite field.
    Answering a question of Thornton, we show that this problem is already hard, namely $\mathbf{\Sigma}^1_2$-complete.
    This implies that the split between easy and hard problems is at a different place in the Borel setting than in the case of the CSP Dichotomy.

	\end{abstract}

	\maketitle
	\section{Introduction}
	
	One of the most prominent directions of modern descriptive set theory is descriptive graph combinatorics, that is, considering definable versions of finite graph-theoretic notions on definable graphs (see, e.g., \cite{marks2022measurable,kechris_marks2016descriptive_comb_survey, pikhurko2021descriptive_comb_survey}). In particular, Borel colorings of Borel graphs play an important role in this area. Recall that a \emph{Borel graph} $\mc{G}$ on a Borel space $X$ is a symmetric Borel subset of $X^2$ disjoint from the diagonal. A \emph{Borel coloring of $\mc{G}$} for $n \in \{2,3,\dots,\aleph_0\}$ with $n$-many colors is a proper $n$-coloring so that all the color classes are Borel, i.e., a Borel map $c:X \to n$ so that if $(x,y) \in \mc{G}$ then $c(x) \neq c(y)$.
	
	It has been shown in \cite{todorvcevic2021complexity} (see also \cite{brandt2021homomorphism}) that deciding Borel $n$-colorability of a Borel graph is as hard as it gets, namely $\mathbf{\Sigma}^1_2$-complete for any finite $n \geq 3$ (see Section \ref{s:pr} for the definitions). In contrast, deciding Borel $2$-colorability is simpler, it is $\mathbf{\Pi}^1_1$, see, e.g., \cite{benen}. 
	
	Clearly, a Borel $n$-coloring is the same as a Borel homomorphism to $K_n$, the complete graph on $n$-vertices. Thus, it makes sense to consider the complexity of the Borel homomorphism problem to a given finite graph, or more generally, to a finite relational structure.
	
	Let $L$ be a relational language with relations $(R_i)_{i \leq k}$, having arity $n_i$. A \emph{Borel $L$-structure} $\mc{G}$ on the Borel (underlying) space $X$ is a  collection of Borel sets $R^\mc{G}_i \subseteq X^{n_i}$. A \emph{Borel homomorphism} between $L$-structures is a Borel map between the underlying spaces, which maps $R_i$-related tuples to $R_i$-related tuples for each $i$. Note that finite $L$-structures are also Borel with the trivial Borel structure. 
	
	Thornton in \cite{thornton2022algebraic} has initiated a systematic study of the complexity of homomoprhism problems of the following form: 
	\[\text{Given a finite $L$-structure $H$ and a Borel $L$-structure $\mc{G}$,}\]\[\text{does $\mc{G}$ admit a Borel homomorphism to $H$?}\] This is motivated by the deep theory (see, e.g., \cite{brady2022notes}) of constraint satisfaction problems (CSPs) in computational complexity and the recent classification theorem of Bulatov \cite{bulatov2017dichotomy} and Zhuk \cite{zhuk2020proof}, which states that a homomorphism problem is either in P or NP-complete (of course, this is not known to be exclusive). Moreover, there is a complete algebraic understanding, in terms of $H$, of the homomorphism problems that are going to be easy.
	
	Based on the distinction between $2$ and $3$ colorability in the Borel context mentioned above, one might hope to detect the same split between easy and hard problems. Since in the Borel context it can be shown that the classes $\mathbf{\Pi}^1_1$ and $\mathbf{\Sigma}^1_2$ do not coincide, this would be even more intriguing (note that this would give no information about finite separation, nevertheless). To this end, based on the complexity result in \cite{todorvcevic2021complexity} and the algebraic theory in \cite{krokhin2005complexity}, Thornton has proved that if the $H$-homomorphism is known to be NP-complete then the corresponding Borel version is $\mathbf{\Sigma}^1_2$-complete. 

	In this paper, however, we show that the hoped analogy between Borel and classical CSPs fails.
    Namely, we show that solving systems of linear equations, which can be done using Gaussian elimination in polynomial time in the classical CSP, is already difficult in the Borel context.

    A system of linear equations over a finite field $\mathbb{F}$ is easily seen to be equivalent to one where every equation contains $\leq n$-many variables (this can be done by introducing dummy variables, see, e.g., \cite[Section 2]{thornton2022algebraic}). Such systems in turn can be encoded by homomorphisms to a finite structure, with universe $\mathbb{F}$ and $n$-ary relations encoding the value of the sums of the given three elements of the field. For example, for $\mathbb{F}_2$ and equations of size $\leq 3$, the resulting finite structure is a structure on $\{0,1\}$ with two trenary relations $R_0$ and $R_1$ so that $(x_0,x_1,x_2) \in \{0,1\}^3$ is in relation $R_i$ iff $x_0+x_1+x_2=i$. Hence solving systems of linear equations are particular instances of homomorphism problems. 
    
    Clearly, as mentioned above, by Gaussian elimination, these are easy in the finite case. In contrast, we show the following. 
	
	\begin{theorem}
		\label{t:main}
		Deciding the solvability of Borel systems of linear equations over a finite field $\mathbb{F}$ is $\mathbf{\Sigma}^1_2$-complete. 
	\end{theorem}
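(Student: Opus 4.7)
For membership in $\mathbf{\Sigma}^1_2$, the argument would be a routine quantifier count: ``$\mc{L}$ admits a Borel solution'' is naturally expressed as ``there exists a real code for a Borel function $s \from X \to \mathbb{F}$ such that $s$ satisfies every equation of $\mc{L}$'', i.e.\ an existential real quantifier followed by a $\mathbf{\Pi}^1_1$ check.

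For the hardness, I would reduce Borel $n$-colorability of Borel graphs---$\mathbf{\Sigma}^1_2$-complete for $n \geq 3$ by \cite{todorvcevic2021complexity}---to Borel linear-equation solvability over $\mathbb{F}$. The central object I would construct is an \emph{infinite Borel gadget}: a Borel linear system $\mc{W}$ over $\mathbb{F}$ with a distinguished tuple of \emph{interface variables} such that, for each fixed value of the interface, $\mc{W}$ admits a Borel solution extending it iff the interface assignment lies in some prescribed non-affine relation $R$ (for instance disequality, or ``not all equal''). No such gadget can exist in the purely finite setting, since linear equations only primitive-positive-define affine subspaces; this matches the classical P-membership of linear-equation CSPs, and it is precisely the obstruction the Borel setting is meant to overcome. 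I would build $\mc{W}$ from a Borel structure carrying a Borel-definable obstruction---the prototype being a Borel graph without a Borel $k$-coloring, such as $\mathbb{G}_0$---wired to the interface so that the obstruction becomes active exactly when the interface assignment falls outside of $R$. For instance, over $\mathbb{F}_2$ one can already feel this at work: take a Borel graph $\mc{H}$ with no Borel $2$-coloring, add auxiliary variables $\{z_a : a \in V(\mc{H})\}$ and, with interface $x,y$, impose $z_a + z_b + x + y + 1 = 0$ for each edge $(a,b) \in E(\mc{H})$; when $x = y$ this forces a nonexistent Borel $2$-coloring of $\mc{H}$, while $x \neq y$ admits the constant Borel solution.

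With the gadget in place, the reduction follows the standard pp-interpretation scheme: for a Borel graph $\mc{G}$, represent each vertex by (possibly a tuple of) $\mathbb{F}$-valued variables, attach a copy of the disequality gadget to every edge, and, when $|\mathbb{F}| < n$ or when the color set has to be carved out of a power of $\mathbb{F}$, splice in further gadgets enforcing the required subset/relation constraints. The resulting $\mc{L}(\mc{G})$ will then admit a Borel solution iff $\mc{G}$ admits a Borel $n$-coloring. I expect the main obstacle to lie in the gadget construction itself: one must simultaneously secure Borel \emph{flexibility} (a Borel extending solution exists for every interface assignment in $R$) and Borel \emph{rigidity} (no Borel extending solution exists otherwise), and do so uniformly enough in the input to yield a Borel reduction on codes. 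The small-field case $\mathbb{F}_2$ is likely to demand additional care, because affine subspaces over $\mathbb{F}_2$ are so restrictive that encoding $n \geq 3$ colors forces one to compose several auxiliary gadgets coherently rather than relying on a single disequality gadget.
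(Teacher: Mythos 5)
Your high-level picture is right: membership in $\mathbf{\Sigma}^1_2$ is routine, and the heart of the matter is an infinitary gadget, built from a $\mathbb{G}_0$-type Borel obstruction, that makes a Borel linear system ``solvable iff the interface values lie in some prescribed relation.'' Your concrete $\mathbb{F}_2$ gadget (edges $z_a+z_b+x+y+1=0$ over a Borel graph with no Borel $2$-coloring) is a perfectly correct instance of this. But the reduction you hang it on does not go through, and the difficulty you flag at the end is not just ``additional care'' --- it is a genuine no-go.

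The problem is that your gadget encodes \emph{disequality on a $2$-element set}, so splicing it along the edges of a Borel graph $\mathcal{G}$ reduces Borel $2$-colorability of $\mathcal{G}$, which is only $\mathbf{\Pi}^1_1$, not $\mathbf{\Sigma}^1_2$-complete. To get hardness you need $n \geq 3$ colors, and here there is an obstruction of exactly the polymorphism-theoretic kind: for any Borel linear system with finitely many interface variables, the set $S \subseteq \mathbb{F}^k$ of interface assignments that extend to a \emph{Borel} solution is closed under the Maltsev operation $m(a,b,c)=a-b+c$ (apply $m$ coordinatewise to three Borel solutions; the result is a Borel solution). Hence $S$ is an affine subspace of $\mathbb{F}^k$. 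But the relations you need --- disequality on a $3$-element subset of $\mathbb{F}^k$, or ``$(x,y)\neq(1,1)$'' in $\mathbb{F}_2^2$ --- are not affine. So no gadget of your type, however cleverly composed, can produce them, and the reduction from $n$-colorability ($n \geq 3$) cannot be completed this way. The single successful case, disequality on $\mathbb{F}_2$, slips through only because $\{(0,1),(1,0)\}$ \emph{is} affine (it is $\{x+y=1\}$), and that case is too weak.

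The paper escapes this by not trying to pp-interpret a finitary non-affine relation at all. Instead it reduces from a purpose-built $\mathbf{\Sigma}^1_2$-complete problem, the coloring problem for an $\N$-dimensional hypergraph $H_0$ on $\{0,1\}$ whose hyperedges are the ``eventually weakly alternating'' sequences (Theorem~\ref{t:hypercomplex}, proved via the shift map, Galvin--Prikry, and the $\mathcal{D}$-machinery of Theorem~\ref{t:complexity}). The gadget $\mathcal{A}_0$ (Proposition~\ref{p:all}) has a \emph{countably infinite} interface $(x_i)_{i\in\N}$, and an interface assignment extends to a Borel solution iff it is eventually weakly alternating. Crucially, this infinitary target relation \emph{is} Maltsev-closed (the Maltsev combination of three eventually weakly alternating sequences is again one, from the maximum of the three thresholds), so the polymorphism obstruction is respected rather than violated. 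The gadget itself is a family of $\mathbb{G}_0$-copies indexed by $r\in[\N]^\N$, each one watching an infinite set of the indices $i$ and triggering the no-Borel-$2$-coloring obstruction when infinitely many pairs $(2i,2i+1)$ fail to alternate. The $p$-characteristic case uses $p$-uniform $\mathbb{G}_0$-analogues and the identity $p\cdot z=0$ to make the constant assignment solve exactly the right equations. So the key idea your proposal is missing is the move to an \emph{infinite-arity} source problem whose target relation is itself affine-like, rather than trying to force a finitary non-affine relation out of linear gadgets.
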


	Our proof has two steps. First, we consider coloring problems of countably infinite dimensional hypergraphs, which turn out to be already complicated to decide (Theorem \ref{t:hypercomplex}). We believe that this result could be interesting on its own. Second, based on the graph $\mathbb{G}_0$ of Kechris-Solecki-Todor\v{c}evi\'c \cite{KST}, we construct infinite gadgets, which allow us to reduce the hypergraph coloring problems to solving systems of linear equations. 
	
	\subsection*{Roadmap} Section \ref{s:pr} contains the necessary technical preliminaries. In Section \ref{s:hypercomplex} we show the complexity of weak version of $2$-colorings for infinite dimensional hypergraphs, while Section \ref{s:gadget} contains the gadget construction. In all the cases, we prove the results for $\mathbb{F}=\mathbb{F}_2$ first and then indicate what to modify for general finite fields. Finally, in Section \ref{s:problems} we formulate some open problems.
	
	\subsection*{Acknowledgments} 
	We are very grateful to Michael Pinsker and Riley Thornton for their insightful comments.
    ZV was supported by Hungarian Academy of Sciences Momentum Grant no. 2022-58 and National Research, Development and Innovation Office (NKFIH) grants no.~113047, ~129211.

    \section{Preliminaries}
       \label{s:pr}

    \subsection{Coding.} Borel, analytic, co-analytic and projections of co-analytic sets are denoted by $\mb{\Delta}^1_1$, $\mb{\Sigma}^1_1$, $\mb{\Pi}^1_1, \mb{\Sigma}^1_2$, see \cite{kechrisclassical} for the basic results about these classes.
    
    First we need to fix an encoding of Borel sets. Let $X$ be a Polish space there are sets $\mb{BC}(X)$, $\mb{A}(X)$, and $\mb{C}(X)$ with properties summarized below.

    \begin{proposition} (see \cite[3.H]{moschovakis2009descriptive})
    	\label{f:prel}
    	\begin{itemize}
    		\item $\mb{BC}(X) \in \bp(\oom)$, $\mb{A}(X) \in \bs(\oom \times X)$, $\mb{C}(X) \in \bp(\oom \times X)$,
    		\item for $c\in \mb{BC}(X)$ and $x \in X$ we have $(c,x) \in \mb{A}(X) \iff (c,x) \in \mb{C}(X)$,
    		\item if $P$ is a Polish space and $B \in \bbo(P \times X)$ then there exists a Borel map $f:P \to \oom$ so that $ran(f) \subset \mb{BC}(X)$ and for every $p \in P$ we have $\mathbf{A}(X)_{f(p)}=B_p$.
    	\end{itemize}
    	
    \end{proposition}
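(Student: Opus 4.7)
The plan is to build the coding from universal analytic and co-analytic sets. Fix Polish universal sets $U \in \bs(\oom \times X)$ and $V \in \bp(\oom \times X)$ so that every $\bs$ (resp.\ $\bp$) subset of $X$ is some vertical section $U_a$ (resp.\ $V_c$), and so that the parametrization is uniform in the following strong sense: for any Polish $P$ and any $\bs$ (resp.\ $\bp$) set $B \subseteq P \times X$, there is a continuous map $\alpha \colon P \to \oom$ with $B_p = U_{\alpha(p)}$ (resp.\ $B_p = V_{\alpha(p)}$). The existence of such $U, V$ is a standard fact about boldface projective pointclasses.

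Using a recursive pairing $\langle \cdot, \cdot \rangle \colon \oom \times \oom \to \oom$, put
\[
\mb{BC}(X) := \{\langle a, c \rangle \in \oom : \forall x \in X\;(U(a,x) \leftrightarrow V(c,x))\},
\]
\[
\mb{A}(X) := \{(\langle a,c\rangle, x) : U(a,x)\}, \qquad \mb{C}(X) := \{(\langle a, c \rangle, x) : V(c,x)\}.
\]
Then $\mb{A}(X)$ is $\bs$ and $\mb{C}(X)$ is $\bp$ directly from the definitions, and they coincide on codes in $\mb{BC}(X)$ by construction, which handles the second bullet. For the first bullet, observe that $U(a,x) \to V(c,x)$ rewrites as $\lnot U(a,x) \lor V(c,x)$, a disjunction of two $\bp$ conditions, and the converse implication is symmetric; since a universal quantifier over $x \in X$ preserves $\bp$, the defining condition of $\mb{BC}(X)$ is $\bp$.

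For the third bullet, given $B \in \bbo(P \times X)$, view $B$ simultaneously as a $\bs$ and a $\bp$ subset of $P \times X$ and apply the uniform parametrizations of $U$ and $V$ to obtain continuous maps $\alpha, \gamma \colon P \to \oom$ with $B_p = U_{\alpha(p)} = V_{\gamma(p)}$. Setting $f(p) := \langle \alpha(p), \gamma(p) \rangle$ then yields a continuous (hence Borel) map whose range lies in $\mb{BC}(X)$, since the two sections agree; and $\mb{A}(X)_{f(p)} = U_{\alpha(p)} = B_p$, as required. The only real subtlety is arranging that the universal sets one picks admit parametrizations that are actually continuous in the parameter, rather than merely that codes exist pointwise; this is exactly the upgrade from pointwise existence of codes to a Borel function of the parameter and is the content hidden inside the phrase ``universal with continuous parametrization.''
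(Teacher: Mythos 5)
Your approach via universal $\bs$ and $\bp$ sets is a reasonable idea to try, but the complexity computation for $\mb{BC}(X)$ contains a genuine error, and it is not a minor slip: it is the reason this approach cannot work as stated. You correctly observe that $U(a,x)\to V(c,x)$ rewrites as $\lnot U(a,x)\lor V(c,x)$, a disjunction of two $\bp$ conditions, hence $\bp$, and that this survives the universal quantifier over $x$. But the converse implication is \emph{not} symmetric: $V(c,x)\to U(a,x)$ rewrites as $\lnot V(c,x)\lor U(a,x)$, a disjunction of two $\bs$ conditions, hence $\bs$. Prefixing a $\bs$ matrix with $\forall x\in X$ (a type-$1$ universal quantifier over a Polish space) yields a $\mb{\Pi}^1_2$ condition, not a $\bp$ one. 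Thus your $\mb{BC}(X)=\{\langle a,c\rangle : \forall x\,(U(a,x)\leftrightarrow V(c,x))\}$ is only seen to be $\mb{\Pi}^1_2$, and there is no evident way to bring it down to $\bp$. The asymmetry between the $\bs$ and $\bp$ parametrizations is exactly what your ``by symmetry'' step silently ignores.

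The reference cited in the paper (Moschovakis, Section~3.H) avoids this problem by coding Borel sets \emph{structurally} rather than by pairs of analytic/co-analytic indices: a code records a well-founded tree whose leaves name basic open sets and whose internal nodes indicate countable union or complementation. Membership in $\mb{BC}(X)$ is then essentially a well-foundedness condition, which is genuinely $\bp$, and the $\bs$ relation $\mb{A}(X)$ and $\bp$ relation $\mb{C}(X)$ arise from the two natural ways (``there is a witnessing branch'' versus ``every attempted refutation fails'') of unwinding the recursion along the tree. The uniformization in the third bullet then comes from effectively assigning such tree-codes along a Borel-in-$p$ construction, rather than from parametrized universal sets. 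So the gap in your write-up is not the continuity-of-parametrization issue you flag at the end, but the false symmetry claim in the first bullet; once that is corrected you will see that a structurally different coding is needed.
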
 
    
    Let $\mathbf{\Delta}$ be a family of subsets of Polish spaces. Recall that a subset $A$ of a Polish space $X$ is \emph{$\mathbf{\Delta}$-hard,} if for every $Y$ Polish and $B \in \mathbf{\Delta}(Y)$  there exists a continuous map $f:Y \to X$ with $f^{-1}(A)=B$. A set is \emph{$\mathbf{\Delta}$-complete} if it is $\mathbf{\Delta}$-hard and in $\mathbf{\Delta}$. 
        
    Now we can define what we mean by some collection being $\mathbf{\Sigma}^1_2$-complete.
    
    \begin{definition}
    	Let $\mc{C}$ be a collection of Borel objects and $\mb{\Gamma}$ be a family of sets. We say that \emph{$\mc{C}$ is in $\mb{\Gamma}$} if for any Polish space $X$ we have $\mb{BC}(X)\cap \mc{C}$ is in $\mb{\Gamma}$. If for some $X$ the set is $\mb{\Gamma}$-hard, then $\mc{C}$ is said to be $\mb{\Gamma}$-complete.
    \end{definition}
	
	 In all of our considerations, the class $\mc{C}$ is going to be invariant under Borel isomorphisms of the underlying space, hence we can forget about the existential quantifier in the above definition. Also, note that in the case of $\mathbf{\Sigma}^1_2$-hardness, which is our main interest, the existence of a continuous map $f$ above can be relaxed to the existence of a Borel map (in fact, to something even weaker; see \cite{sabok}).
		
	A slight technical strengthening of this notion is the following. In practice, we will always get this stronger variant. 
	
	\begin{definition}
		Let $\mc{C}$ be a collection of Borel objects and $\mb{\Gamma}$ be a family of sets. We say that \emph{$\mc{C}$ is uniformly $\mb{\Gamma}$-hard} if there are Polish spaces $Z$ and $X$ and a Borel set $B \subseteq Z \times X$ so that $\{z:B_z \in \mc{C}\}$ is $\mb{\Gamma}$-hard. 
	\end{definition}

    \subsection{Borel CSPs.}
    
    We will need a more general class of structures than the ones defined in the introduction (i.e., we want to allow relations of infinite arities). Let $L$ be a signature consisting of relations $(R_i)_{i}$ of finite or countably infinite arities $(r_i)_{i}$, respectively. A \emph{Borel (relational) structure $\mc{G}$} on the space $X$ is a collection of Borel subsets $R_i^\mc{G} \subseteq X^{r_i}$. If $\mc{G}$ and $\mc{H}$ are Borel relational structures with the same signature, on spaces $X$ and $Y$, a \emph{Borel homomorphism from $\mc{G}$ to $\mc{H}$} is a Borel map $\phi: X \to Y$ so that $\forall i \ \forall x \in X^{r_i} \ (x \in R^\mc{G}_i \implies \phi(x) \in R^\mc{H}_i)$, where $\phi(x)$ is the sequence obtained by applying $\phi$ to $x$ elementwise. If $B$ is a Borel subset of $X$, the \emph{restriction} of $\mc{G}$ to $B$ is the structure on $B$ with the relations $B^{r_i} \cap R^\mc{G}_i$. 
    
    For a Borel structure $\mathcal{H}$, the \emph{Borel homomorphism problem to $\mathcal{H}$} is denoted by $CSP_B(\mathcal{H})$. This problem is said to be in \emph{$\mb{\Gamma}$} if the codes of the structures that admit a Borel homomorphism to $\mc{H}$ form a set in $\mb{\Gamma}$, and we apply a similar convention to being $\mb{\Gamma}$-hard, etc.
    
    The easy upper bound on the complexity is $\mathbf{\Sigma}^1_2$ (see \cite{frisch2024hyper}):
    
    \begin{proposition}
    	\label{pr:homo}
    	$CSP_B(\mc{H})$ is $\mathbf{\Sigma}^1_2$, for any Borel structure $\mathcal{H}$.
    \end{proposition}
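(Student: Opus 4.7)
The plan is to express the condition as $\exists d\, \Phi(c, d)$ where $d$ ranges over codes of Borel functions $\phi\from X \to Y$ and $\Phi$ is $\bp$; an existential real quantifier in front of a $\bp$ formula then gives $\mb{\Sigma}^1_2$. The key flexibility is provided by \Cref{f:prel}: for $d \in \mb{BC}(X)$, the $\bs$-set $\mb{A}_d$ and the $\bp$-set $\mb{C}_d$ are equal, so the Borel condition ``$x$ lies in the set coded by $d$'' can be presented either as $\bs$ or as $\bp$ --- whichever keeps the clause under consideration in $\bp$.

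Assuming $Y$ is a Borel subset of $2^{\om}$ (every Polish space is Borel isomorphic to one), I would code a Borel function $\phi\from X \to Y$ by a \emph{Borel partition tree}: a family $(A_s)_{s \in 2^{<\om}}$ of Borel subsets of $X$ with $A_\emptyset = X$ and $A_s = A_{s \concatt 0} \sqcup A_{s \concatt 1}$ for every $s$; then $\phi(x)$ is the unique branch $y \in 2^{\om}$ with $x \in A_{y \upharpoonright n}$ for every $n$. A single real $d$ lists Borel codes $d_s$ of the $A_s$'s. The condition $\Phi(c, d)$ to verify is the conjunction of: (i) each $d_s \in \mb{BC}(X)$; (ii) the partition-tree equations hold; (iii) $\phi(x) \in Y$ for every $x$; and (iv) for every $i$ and $\bar{x} \in X^{r_i}$, if $\bar{x} \in R_i^{\mc{G}_c}$ then $\phi(\bar{x}) \in R_i^{\mc{H}}$.

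Each clause is $\bp$ by careful bookkeeping with \Cref{f:prel}: use $\mb{A}$ when a Borel membership appears negatively (disjointness of the tree, antecedents of implications), and $\mb{C}$ when it is propagated directly under a universal quantifier (cover equations, ``$A_\emptyset = X$''). Clause (iii) becomes ``$\forall x\, \forall y \in 2^{\om} \setminus Y\, \exists n\, x \notin A_{y \upharpoonright n}$''. For clause (iv) I would rewrite $\phi(\bar{x}) \in R_i^{\mc{H}}$ as $\forall \bar{y} \in Y^{r_i}\, (\bar{y} = \phi(\bar{x}) \to \bar{y} \in R_i^{\mc{H}})$, exploiting uniqueness of $\phi(\bar{x})$; with $\mb{A}$ for the $\bs$ equality ``$\bar{y} = \phi(\bar{x})$'' in the antecedent and the Borel consequent, the implication is $\bp$. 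The main obstacle is precisely this bookkeeping: a naive coding of $\phi$ by its graph $F \subseteq X \times Y$ would force totality to be expressed as $\forall x \exists y\, (x,y) \in F$, which is $\mb{\Pi}^1_2$. The partition-tree encoding sidesteps this by making totality automatic from the cover equations.
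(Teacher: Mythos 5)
The paper does not include its own proof of this proposition: it cites \cite{frisch2024hyper} and moves on, so there is nothing internal to compare your argument against. Your proof is correct and is essentially the standard argument for upper bounds of this form. The central idea---coding a Borel function $\phi$ by the family of preimages of a neighborhood basis (your partition tree $(A_s)_s$, i.e.\ $A_s = \phi^{-1}(N_s)$) rather than by a Borel code of its graph, so that totality of $\phi$ falls out of the cover equations instead of requiring a $\forall x \exists y$ quantifier over reals---is precisely the trick that keeps the matrix $\bp$ after the outer $\exists d$, and you correctly identify that the naive graph coding would land you at $\mb{\Pi}^1_2$. The $\mb{A}(X)$/$\mb{C}(X)$ discipline you describe (the $\bs$ presentation in antecedents and negated positions, the $\bp$ presentation in consequents and under outer universal quantifiers, legitimized by $\mb{A}_d = \mb{C}_d$ once $d \in \mb{BC}(X)$, which is itself a $\bp$ hypothesis folded into the conjunction) is exactly right, and since $\bp$ is closed under countable unions and intersections, the number quantifiers in (ii)--(iv) and the quantification over the countably many $s$, the countably many $i$, and the countable or $\aleph_0$-ary tuples in $X^{r_i}$ and $Y^{r_i}$ cause no leakage. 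Two small remarks: since every uncountable Polish space is Borel isomorphic to $2^\N$ itself (and every countable one to a discrete space), you can take $Y = 2^\N$ or $Y$ discrete, which makes clause (iii) vacuous; and since $\mc{H}$ is a fixed Borel structure, each $R_i^{\mc{H}}$ is $\bbo$ and may be used on either side of an implication freely, so no extra bookkeeping is needed there.
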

	It is convenient to define a notion of reduction between Borel CSPs. Let $L$ be a language with relations $(R_i)_i$, having arities $r_i$. Let $Y=\bigsqcup_i X^{r_i}$. For a Borel space $Z$, a \emph{$Z$-parametrized Borel family of $L$-structures} on the space $X$ is a Borel subset of $Z \times Y$. For $z \in Z$ the set $B_z$ is interpreted as an $L$-structure where $R^{B_z}_i=X^{r_i} \cap B_z$. 
	\begin{definition}
		Let $L$ and $L'$ be relational languages and $\mc{H}$ and $\mc{H}'$ Borel structures in the respective languages. We say that $CSP_B(\mc{H})$ \emph{uniformly Borel reduces to} $CSP(\mc{H}')$ if for any $Z$-parametrized Borel family $B$ of $L$-structures, there is a $Z$-parametrized Borel family of $L'$-structures $B'$ so that $B_z$ admits a Borel homomorphism to $\mc{H}$ iff $B'_z$ admits a Borel homomorphism to $\mc{H}'$.
		
	\end{definition} 
	
	The following is clear from the definitions and Proposition \ref{f:prel}. 
	\begin{proposition}
		\label{p:prel}
		Assume that $CSP_B(\mc{H})$ uniformly Borel reduces to $CSP_B(\mc{H}')$ and that $CSP_B(\mc{H})$ is uniformly $\mb{\Sigma}^1_2$-hard. Then so is $CSP_B(\mc{H}')$.
		
	\end{proposition}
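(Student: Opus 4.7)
The plan is to simply compose the two hypotheses, since the proposition is really just a bookkeeping statement about unpacking the definitions. First I would fix a witness to the uniform $\mathbf{\Sigma}^1_2$-hardness of $CSP_B(\mc{H})$: Polish spaces $Z$ and $X$ together with a $Z$-parametrized Borel family $B \subseteq Z \times \bigsqcup_i X^{r_i}$ of $L$-structures such that the set $S := \{z \in Z : B_z \text{ admits a Borel homomorphism to } \mc{H}\}$ is $\mathbf{\Sigma}^1_2$-hard.

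Next I would feed this particular Borel family $B$ into the assumed uniform Borel reduction from $CSP_B(\mc{H})$ to $CSP_B(\mc{H}')$. This yields a Polish space $X'$ and a $Z$-parametrized Borel family $B' \subseteq Z \times \bigsqcup_i (X')^{r'_i}$ of $L'$-structures with the defining property that, for every $z \in Z$, $B_z$ admits a Borel homomorphism to $\mc{H}$ if and only if $B'_z$ admits one to $\mc{H}'$. Reading this equivalence levelwise gives $\{z \in Z : B'_z \text{ admits a Borel homomorphism to } \mc{H}'\} = S$, which is $\mathbf{\Sigma}^1_2$-hard by the choice of $B$. The triple $(Z, X', B')$ therefore directly witnesses that $CSP_B(\mc{H}')$ is uniformly $\mathbf{\Sigma}^1_2$-hard.

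There is no real obstacle in the argument; it is a pure definition chase once the notions of uniform hardness and uniform Borel reduction are set in the parametrized form used in the paper. The only place where the coding machinery of \Cref{f:prel} is implicitly needed is in checking that these parametrized notions are compatible with the standard coding $\mb{BC}$ of Borel sets, so that ``uniformly $\mathbf{\Sigma}^1_2$-hard'' is indeed a strengthening of the coding-based notion of $\mathbf{\Sigma}^1_2$-hardness; but for the statement of \Cref{p:prel} itself, only the above composition is required.
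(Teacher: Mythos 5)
Your proof is correct and is exactly the definition-chase the paper has in mind; the paper's own ``proof'' is simply the remark that the statement is clear from the definitions together with \Cref{f:prel}. Your closing observation---that \Cref{f:prel} is only needed to relate the parametrized ``uniform'' notion to the coding-based notion of $\mb{\Sigma}^1_2$-hardness, and not for the composition itself---is a fair reading of why the authors cite it there.
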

	
	If $\mathbb{F}$ is a finite field, $CSP_B(\mathbb{F})$ will stand for the homomorphism problem encoding systems of linear equations over $\mathbb{F}$, as has been described in the introduction. Nevertheless, we will use the more intuitive description in terms of equations when we talk about these. Moreover, if the equations are defined on the underlying space $Y$, we will denote the variable corresponding to an element $y\in Y$ by $x_y$. 
	
	We will also use the term \emph{$\N$-dimensional Borel hypergraph} for a Borel $L$-structure, where $L$ contains a single relation of arity $\aleph_0$.

    \subsection{Complexity.} The most important tool (and essentially the only tool which we are aware of) to establish complexity results of this type is the usage of non-dominating sets. Recall that $[\N]^\N$ is the collection of infinite subsets of the natural numbers. We identify elements of $[\N]^\N$ with their increasing enumeration. If $x,y \in [\mathbb{N}]^{\mathbb{N}}$ let us use the notation $y \leq^\infty x$ in the case the set $\{n:y(n) \leq x(n)\}$ is infinite and $y \leq^* x$ if it is co-finite. Set \[\mathcal{D}=\{(x,y):y \leq^\infty x\}.\]
    
    The following is a consequence of a general theorem in \cite{todorvcevic2021complexity} and has been isolated in \cite{frisch2024hyper}. 
    \begin{theorem}
    	\label{t:complexity}
    	Let $\mathcal{H}$ be a Borel structure on some Polish space $X$ and assume that there exists a Borel structure $\mathcal{G}$ on $[\N]^\N$ that does not admit a Borel homomorphism to $\mathcal{H}$ and a Borel map $\Psi:\mathcal{D} \to X$ so that for each $f$ we have that $\Psi_f$ is a homomorphism from $\restriction{\mathcal{G}}{\mathcal{D}_f}$ to $\mathcal{H}$. Then the Borel structures which admit a Borel homomorphism to $\mathcal{H}$ form a $\mathbf{\Sigma}^1_2$-complete set, in fact, $CSP_B(\mc{H})$ is uniformly $\mathbf{\Sigma}^1_2$-hard.  
     \end{theorem}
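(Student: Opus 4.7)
The upper bound $CSP_B(\mc{H}) \in \mathbf{\Sigma}^1_2$ is immediate from Proposition \ref{pr:homo}, so the only real content lies in proving uniform $\mathbf{\Sigma}^1_2$-hardness. The plan is to exhibit a Borel reduction to $CSP_B(\mc{H})$ from a $\mathbf{\Sigma}^1_2$-hard problem phrased in terms of non-dominating sets. The guiding fact, obtained from the tree representation of $\mathbf{\Pi}^1_1$ sets together with the characterization of well-foundedness by a bounding function in $\N^\N$, is that for some Polish space $Z$ one can produce a Borel family $\{B_z\}_{z \in Z}$ of subsets of $[\N]^\N$ such that the set
\[
S := \{z \in Z : \exists f \in [\N]^\N \text{ with } B_z \subseteq \mathcal{D}_f\}
\]
is already $\mathbf{\Sigma}^1_2$-hard. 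This non-dominating representation of $\mathbf{\Sigma}^1_2$ is the standard translation employed in \cite{todorvcevic2021complexity} and \cite{frisch2024hyper}.

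Given such a family, the natural candidate reduction sends each $z$ to the restricted structure $\mc{G}_z := \restriction{\mc{G}}{B_z}$, which by Proposition \ref{f:prel} depends Borel-ly on $z$ through its code. The easy direction is then immediate: if $z \in S$ and $f$ witnesses $B_z \subseteq \mathcal{D}_f$, the hypothesis on $\Psi$ yields that $\Psi_f$ restricted to $B_z$ is a Borel homomorphism $\mc{G}_z \to \mc{H}$. Combined with Proposition \ref{p:prel}, this reduces the whole task to verifying the converse.

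The main obstacle is precisely this converse: when $B_z$ lies in no $\mathcal{D}_f$, one must show $\mc{G}_z$ admits no Borel homomorphism to $\mc{H}$. The strategy I would follow is to arrange the family $\{B_z\}$ so that in the ``unbounded'' case, any putative Borel homomorphism defined on $B_z$ can be glued, on the complement $[\N]^\N \setminus B_z$, with a suitable $\Psi_g$ — exploiting that this complement can be covered Borel-uniformly by non-dominating sets of the form $\mathcal{D}_g$ — to produce a Borel homomorphism defined on all of $[\N]^\N$, contradicting the hypothesis that $\mc{G}$ has no Borel homomorphism to $\mc{H}$. Formalizing the compatibility between the dominating representation of $\mathbf{\Sigma}^1_2$ and the Borel gluing of the partial homomorphisms $\Psi_f$ is the technical crux, and is exactly what is carried out in the general framework of \cite{todorvcevic2021complexity} and isolated in \cite{frisch2024hyper}.
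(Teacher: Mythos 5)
The paper itself does not prove this theorem: it is cited as a consequence of the general framework of \cite{todorvcevic2021complexity}, in the form isolated by \cite{frisch2024hyper}. So there is no in-paper proof to compare against, and what follows is an assessment of your sketch on its own terms.

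Your upper bound via Proposition~\ref{pr:homo} is fine, and the instinct that the hardness should come from a ``bounded-by-some-$f$'' representation of $\mathbf{\Sigma}^1_2$ is the right one. But there are two concrete problems. First, it is not clear that the set $S=\{z:\exists f\ B_z\subseteq\mathcal{D}_f\}$ is $\mathbf{\Sigma}^1_2$-hard when $\{B_z\}$ is a Borel-parametrized family of \emph{Borel} subsets of $[\N]^\N$. For analytic sections, boundedness statements of this kind tend to collapse to $\mathbf{\Pi}^1_1$ (e.g.\ via Hurewicz-type dichotomies); the genuinely $\mathbf{\Sigma}^1_2$-complete versions used in \cite{todorvcevic2021complexity} are phrased in terms of ranks of trees in a $\mathbf{\Pi}^1_1$ (not Borel) family, and the passage to Borel objects is exactly where the real work is done. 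You gesture at ``the tree representation of $\mathbf{\Pi}^1_1$ sets'' but then present the resulting $B_z$ as if they were themselves Borel without justifying that the complexity survives.

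Second, and more seriously, the converse direction as you describe it does not work. Your plan is: if $z\notin S$, suppose toward a contradiction that $h$ is a Borel homomorphism from $\restriction{\mathcal{G}}{B_z}$ to $\mathcal{H}$, then glue $h$ with some $\Psi_g$ on the complement to get a Borel homomorphism defined on all of $[\N]^\N$, contradicting the hypothesis on $\mathcal{G}$. But a tuple $\bar y\in R^{\mathcal{G}}$ can have some coordinates in $B_z$ and others outside it; the glued map sends such a tuple to a mixture of $h$-values and $\Psi_g$-values, and neither $h$ being a homomorphism on $B_z$-tuples nor $\Psi_g$ being one on $\mathcal{D}_g$-tuples says anything about these mixed tuples. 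The hypotheses of the theorem give you no compatibility between $h$ and $\Psi_g$, so the glued map need not be a homomorphism of $\mathcal{G}$ at all. This is precisely why the reduction in \cite{todorvcevic2021complexity}/\cite{frisch2024hyper} does \emph{not} simply send $z$ to a restriction of a fixed $\mathcal{G}$; the structures produced there are built from scratch (roughly, by interleaving copies of $\mathcal{G}$ with a tree-rank coding) so that in the ``bad'' case a Borel homomorphism on the produced structure \emph{transfers} to one on $\mathcal{G}$, rather than having to be extended by an ad hoc gluing. The step you flag as ``the technical crux'' is indeed the crux, and the strategy you propose for it would fail.
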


 	\section{Eventually weakly alternating $2$-colorings}
 	\label{s:hypercomplex}
 	
 	The first step of our proof is to show that the homomorphism problem to a certain infinite dimensional Borel hypergraph is $\mathbf{\Sigma}^1_2$-complete.
 	Essentially, the hypergraph will encode $2$-colorings, which eventually alternate along every hyperedge, in the following sense.
 	\begin{definition}
 		Let $x \in 2^\N$. We say that $x$ \emph{eventually weakly alternates}, if there is some $i_0$, such that for all $i \geq i_0$ we have $x(2i) \neq x(2i+1)$.
 		
 		Let $H_0$ be the $\mathbb{N}$-dimensional Borel hypergraph on the set $\{0,1\}$ containing the hyperedges that eventually weakly alternate. 
 	\end{definition}

 	Now we have the following theorem.
 	
 	\begin{theorem}
 		\label{t:hypercomplex}
 		The collection of $\N$-dimensional Borel hypergraphs admitting a homomorphism to $H_0$ is $\mathbf{\Sigma}^1_2$-complete. In fact, $CSP_B(H_0)$ is uniformly $\mathbf{\Sigma}^1_2$-hard. 
 	\end{theorem}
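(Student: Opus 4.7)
The plan is to invoke \Cref{t:complexity}: I will exhibit a Borel $\N$-dimensional hypergraph $\mathcal{G}$ on $[\N]^\N$ together with a Borel map $\Psi\colon\mathcal{D}\to\{0,1\}$ such that $\mathcal{G}$ admits no Borel homomorphism to $H_0$, while for every $f$ the section $\Psi_f$ is a Borel homomorphism from $\mathcal{G}\!\!\upharpoonright\!\!\mathcal{D}_f$ to $H_0$. For $\mathcal{G}$ I take the \emph{shift hypergraph}: for every $x\in[\N]^\N$ the $\N$-tuple $(\Shiftmap^i(x))_{i\in\N}$ is a hyperedge, where $\Shiftmap(y)(n)=y(n+1)$ is the usual shift.

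To rule out a global Borel homomorphism, assume $c\colon[\N]^\N\to\{0,1\}$ were one. Applying the defining condition of $H_0$ to the orbit of $x$ gives eventually $c(\Shiftmap^{2i}(x))\neq c(\Shiftmap^{2i+1}(x))$, and applying it to $\Shiftmap(x)$ gives eventually $c(\Shiftmap^{2i+1}(x))\neq c(\Shiftmap^{2i+2}(x))$; combining these, for every $x$ the sequence $(c(\Shiftmap^i(x)))_i$ is strictly alternating for all sufficiently large $i$. Hence $\phi(x):=\lim_{i\to\infty} c(\Shiftmap^{2i}(x))$ is a well-defined pointwise limit of Borel functions, so Borel, and satisfies $\phi\circ\Shiftmap=1-\phi$. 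This makes $\phi$ a Borel $2$-coloring of the shift graph, contradicting the classical Kechris--Solecki--Todor\v{c}evi\'c fact that its Borel chromatic number is infinite.

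To construct $\Psi$, I use the dominating witness: for $(f,y)\in\mathcal{D}$ set $n(f,y):=\min\{n:y(n)\leq f(n)\}$, which provides a countable Borel partition $\{A^f_n\}_n$ of $\mathcal{D}_f$ on each piece of which $y(n)$ is bounded by $f(n)$, thereby encoding rich finite data. I define $\Psi(f,y)$ as a parity-valued function of the triple $(n(f,y),y(n(f,y)),f(n(f,y)))$, chosen so that whenever $y,\Shiftmap(y)\in\mathcal{D}_f$ one has $\Psi_f(y)\neq\Psi_f(\Shiftmap(y))$. Granted this, for every $x$ whose entire shift-orbit lies in $\mathcal{D}_f$ the sequence $(\Psi_f(\Shiftmap^i(x)))_i$ strictly alternates and in particular eventually weakly alternates, i.e.\ lies in $H_0$, so $\Psi_f$ is the required Borel homomorphism from $\mathcal{G}\!\!\upharpoonright\!\!\mathcal{D}_f$ to $H_0$.

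The main obstacle is the design of the parity function: it amounts to producing, uniformly in $f$, a Borel $2$-coloring of the shift graph restricted to the portion of $\mathcal{D}_f$ that is actually touched by hyperedges of $\mathcal{G}\!\!\upharpoonright\!\!\mathcal{D}_f$. The key analytic input is that membership $y,\Shiftmap(y)\in\mathcal{D}_f$ forces a tight compatibility between $n(f,y)$ and $n(f,\Shiftmap(y))$, from which one can read off a parity invariant that flips across every such edge. Once the $\mathbb{F}_2$ case is settled in this form, the verification and the downstream application through \Cref{t:complexity} immediately yield uniform $\mathbf{\Sigma}^1_2$-hardness of $CSP_B(H_0)$.
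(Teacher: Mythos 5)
The student's proposal diverges from the paper in a crucial way: it uses the \emph{plain shift hypergraph} on $[\N]^\N$ (hyperedges $(S^i(x))_{i\in\N}$), whereas the paper works with a carefully engineered hypergraph $\mathcal{H}$ whose hyperedge $H_x$ has the special feature that \emph{every} element $H_x(j)$ shares the fixed prefix $\{x(0),x(1)\}$ (or $\{x(1)\}$). That prefix is the whole point: it is what makes the paper's ``bad set'' $A$ (those $z$ whose first $f'$-interval contains exactly three points) intersect $\operatorname{ran}(H_x)$ only finitely often (Lemma~3.8), which in turn makes $\Psi_f$ an eventual $2$-coloring along each edge. For the plain shift hyperedge $(S^i(x))_i$, the minimum of $S^i(x)$ is $x(i)$, which marches off to infinity, so no such finiteness lemma holds and the paper's positive construction does not transfer.

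Your negative direction is fine: any Borel homomorphism $c$ to $H_0$ would force $(c(S^i(x)))_i$ to eventually strictly alternate, so $\phi(x)=\lim_i c(S^{2i}(x))$ would be a Borel $2$-coloring of the shift graph, contradicting Kechris--Solecki--Todor\v{c}evi\'c. (The paper gets the same conclusion via Galvin--Prikry.)

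The genuine gap is in the positive direction. You assert that a parity-valued function of the triple $\bigl(n(f,y),\,y(n(f,y)),\,f(n(f,y))\bigr)$ can be chosen to flip across every shift edge $\{y,S(y)\}$ lying inside $\mathcal{D}_f$, citing a ``tight compatibility'' between $n(f,y)$ and $n(f,S(y))$. This is not proved, and in fact it is false. Fix $f$ with $f(0)\geq 3$ and with $f$ growing fast, and take the tail $T=\{100,101,102,\dots\}$, so that $T$ and all its shifts lie in $\mathcal{D}_f$. Consider $y=\{1,2\}\cup T$, $y'=\{2,3\}\cup T$, $y''=\{1,3\}\cup T$. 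All of these and all their shifts lie in $\mathcal{D}_f$, and in each case $n(f,\cdot)=0$, so the triple is just $\bigl(0,\,\cdot(0),\,f(0)\bigr)$. The constraint from $y$ forces $\Psi_f$ on $(0,1,f(0))$ and $(0,2,f(0))$ to differ; from $y'$, that on $(0,2,f(0))$ and $(0,3,f(0))$ to differ (hence $(0,1,f(0))$ and $(0,3,f(0))$ to agree); but from $y''$, that $(0,1,f(0))$ and $(0,3,f(0))$ must \emph{differ}. Contradiction. So no parity function of that triple can work, and the proposal does not deliver the needed $\Psi_f$. To carry out the ``dominated $\Rightarrow$ homomorphism'' direction one must either switch to a hypergraph whose edge elements share a common prefix (as the paper does, using hyperedges $H_x(2i)=\{x(0),x(1)\}\cup S^{i+2}(x)$ and $H_x(2i+1)=\{x(1)\}\cup S^{i+2}(x)$), or find a substantially different, unconstructed coloring scheme on $\mathcal{D}_f$ for the bare shift hypergraph.
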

 	
 	In order to show this theorem we will use Theorem \ref{t:complexity}. We will also make a critical use of the \emph{shift-map} $S$ on $[\N]^\N$, that is, the map defined by
 	\[S(x)=x \setminus \{x(0)\}.\]
 	 Define a hypergraph $\mathcal{H}$ on $[\N]^\N$ as follows. 
 	\begin{definition}
 		For $x \in [\N]^\N$ let $H_x$  be a hyperedge defined as 
 		\[H_x(2i)=\{x(0),x(1)\} \cup S^{i+2}(x),\]
 		and 
 		\[H_x(2i+1)=\{x(1)\} \cup S^{i+2}(x).\]
 		Let $\mathcal{H}=\{H_x:x \in [\N]^\N\}$. 
 	\end{definition}
 		Observe that for each $i$ and $x$ we have $H_x(2i+1)=S(H_x(2i))$.
 	\begin{proposition}
 		\label{pr:nohomo}
 		$\mathcal{H}$ does not admit a Borel homomorphism to $H_0$. 
 	\end{proposition}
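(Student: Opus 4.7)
The plan is to derive a contradiction from the Galvin--Prikry theorem. Suppose, toward a contradiction, that $c \from [\N]^\N \to \{0,1\}$ is a Borel homomorphism from $\mathcal{H}$ to $H_0$. By definition, this means that for every $x \in [\N]^\N$ there is some $i_0 = i_0(x)$ such that $c(H_x(2i)) \neq c(H_x(2i+1))$ for all $i \geq i_0$.

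Next, I would invoke the Galvin--Prikry theorem: every Borel function $c \from [\N]^\N \to \{0,1\}$ admits an infinite set $A \subseteq \N$ such that $c$ is constant on $[A]^\N$. Let $x \in [\N]^\N$ be the increasing enumeration of such an $A$. The key observation is that for every $i \in \N$, both
\[
H_x(2i) = \{x(0), x(1)\} \cup S^{i+2}(x) \quad \text{and} \quad H_x(2i+1) = \{x(1)\} \cup S^{i+2}(x)
\]
are infinite subsets of $A$, since every element appearing in them is of the form $x(j)$ for some $j \in \N$, i.e., lies in $A$. Therefore $H_x(2i), H_x(2i+1) \in [A]^\N$, and so $c(H_x(2i)) = c(H_x(2i+1))$ for every $i \in \N$, contradicting the eventually weakly alternating condition.

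There is no serious obstacle to this argument; the only point requiring a small verification is that the sets $H_x(2i)$ and $H_x(2i+1)$ really do lie in $[A]^\N$, which follows immediately from the construction of the hyperedges as unions of a finite subset of $x$ together with a shift $S^{i+2}(x)$. The proof is then complete by applying Galvin--Prikry to the hypothetical Borel homomorphism.
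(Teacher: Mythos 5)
Your argument is correct and is essentially the paper's own proof: both invoke Galvin--Prikry to find an infinite $A$ (equivalently a single $x\in[\N]^\N$) on whose infinite subsets the homomorphism is constant, and then observe that every coordinate $H_x(j)$ of the hyperedge lies in $[A]^\N$, so the homomorphism cannot be eventually weakly alternating along $H_x$. You merely spell out more explicitly the observation that the paper states as $\ran(H_x)\subseteq[x]^\N$.
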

 	\begin{proof}
 		If $h:[\N]^\N \to 2$ was a Borel homomorphism then $h$ is constant on a set of the form $[x]^\N$ by the Galvin-Prikry theorem (see \cite[Section 19.C]{kechrisclassical}), but $ran(H_x) \subset [x]^\N$, a contradiction. 
 	\end{proof}
 
 	\begin{proposition}
 		\label{pr:yeshomo} 
 		There is a Borel map $\Psi:\mc{D} \to 2$ such that for each $f \in [\N]^\N$ we have that $\Psi_f$ is a Borel homomorphism from $\restriction{\mc{H}}{\mc{D}_f}$ to $H_0$.
 	\end{proposition}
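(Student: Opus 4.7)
The key structural observation is that $H_x(2i+1) = S(H_x(2i))$, since $x(0) = \min H_x(2i)$. Writing $y_0 = H_x(2i)$, the eventually-weakly-alternating condition on $\Psi_f \circ H_x$ thus reduces to $\Psi_f(y_0) \ne \Psi_f(S(y_0))$ for all sufficiently large $i$.

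For the definition, I would take $\Psi : \mathcal{D} \to 2$ to be a parity-based Borel invariant of $(f, y)$ designed to detect the removal of $\min y$. A natural starting candidate is $\Psi(f, y) = \min\{n \in \N : y(n) \le f(n)\} \bmod 2$, which is well-defined on $\mathcal{D}$ (since $y \le^\infty f$) and visibly Borel. The actual formula may need to combine this basic invariant with auxiliary Boolean features involving comparisons of $y(0), y(1)$ with $f(0), f(1)$, tuned so that the parity alternates uniformly across all admissible $x$; the adjusted $\Psi$ remains Borel by construction.

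For the verification, fix $f$ and $x$ with $H_x \subseteq \mathcal{D}_f$. Using the explicit form $y_0 = \{x(0), x(1)\} \cup S^{i+2}(x)$, for large $i$ the entries $y_0(n) = x(n+i)$ with $n \ge 2$ satisfy $y_0(n) \ge n + i$, so they exceed $f(n)$ in any bounded range of $n$. Consequently, the value of $\Psi_f(y_0)$ is eventually determined by how the ``head'' $(x(0), x(1))$ compares with $(f(0), f(1))$, while $\Psi_f(S(y_0)) = \Psi_f(H_x(2i+1))$ is eventually determined by how $x(1)$ compares with $f(0)$. Removing the extra element $x(0)$ in the transition $y_0 \mapsto S(y_0)$ flips the calibrated parity invariant.

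The main obstacle is the finite combinatorial case analysis on the possible relative positions of $(x(0), x(1))$ versus $(f(0), f(1))$, which also dictates the precise auxiliary features needed in the definition of $\Psi$; the constraint $H_x \subseteq \mathcal{D}_f$ is used to dismiss the tail contributions for large $i$.
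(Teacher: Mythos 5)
You correctly identify the structural key that $H_x(2i+1) = S(H_x(2i))$, so the task reduces to finding a Borel parity invariant $\Psi_f$ on $\mc{D}_f$ that eventually flips under the shift along each hyperedge $H_x$ landing in $\mc{D}_f$. However, the concrete candidate you propose does not work, and the vague fix does not address the actual obstruction.

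Take $\Psi(f,y) = \min\{n : y(n) \le f(n)\} \bmod 2$ and look at $y_0 = H_x(2i)$ for large $i$. If $x(0) \le f(0)$, then $\Psi(f, y_0) = 0$ for all large $i$; but if also $x(1) \le f(0)$, then $\Psi(f, S(y_0)) = 0$ as well, so nothing alternates. Worse, if $x(0) > f(0)$ and $x(1) > f(1)$, the minimum is determined by the tail $x(n+i)$, which changes with $i$ in an uncontrolled way, so the parity need not stabilize at all, let alone alternate. The deeper problem is that the sole difference between $H_x(2i)$ and $H_x(2i+1)$ is the removal of the single element $x(0)$, and $x(0)$ can be arbitrarily large. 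Any invariant built from a fixed finite list of coordinatewise comparisons of $y$ against $f$ (your ``auxiliary Boolean features'' on $y(0), y(1)$ versus $f(0), f(1)$) will fail to register the presence or absence of $x(0)$ whenever $x(0)$ happens to exceed those thresholds, so the calibrated parity will not uniformly flip across all admissible $x$.

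The paper circumvents this with a genuinely different device. It reparametrizes $f$ to $f'(n)=f(n^2)$ and works with the induced partition of $\N$ into intervals $[f'(n),f'(n+1))$. It then singles out the set $A$ of $y\in\mc{D}_f$ whose first nonempty block has exactly $3$ elements, and defines $\Psi_f(y)$ to be the parity of the least $l$ with $S^l(y)\in A$. The key lemma is that $A\cap\ran(H_x)$ is finite: for $j$ large, the first block of $H_x(j)$ contains only $x(1)$ or $\{x(0),x(1)\}$, hence fewer than $3$ elements. Thus for large $i$ neither $H_x(2i)$ nor $H_x(2i+1)=S(H_x(2i))$ lies in $A$, so the respective shift counts to reach $A$ differ by exactly one and have opposite parities. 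This ``count shifts until hitting $A$'' mechanism is what makes the parity robustly flip under $S$; your proposal lacks any analogue of it, and without one the argument has a real gap.
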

 	\begin{proof}
 		Given $f$, we will construct the desired homomorphism $\Psi_f$ and it will be clear from the construction that $\Psi$ is Borel. The following claim is immediate.
 		
 		\begin{claim}
 			\label{cl:easy}
 			Let $f'(n)=f(n^2)$ for $n \geq 1$, and $f'(0)=0$. Then for all $x \in \mc{D}_f$ there exists an $n$ such that $|x\cap [f'(n),f'(n+1))|\geq 3$. 
 		\end{claim}	
 	
 		Let \[A=\{x\in  \mc{D}_f: |x\cap [f'(n),f'(n+1))|=3,\]\[ \text{ where $n$ is minimal with $|x\cap [f'(n),f'(n+1))| \neq \emptyset$}\}.\] 
 		Now we define the mapping $\Psi_f:\mc{D}_f \to 2$ by letting 
 			\[
 		\Psi_f(x)=
 		\begin{cases}
 		1, & \text{if the minimal $l \geq 0$ with }S^l(x) \in A \text{ is even}\\
 		0, & \text{otherwise.}
 		\end{cases}
 		\]
 		It follows from Claim \ref{cl:easy} that $h(x)$ is well-defined for each $x \in \mc{D}_f$. Let us make an easy observation. 
 		\begin{claim}
 			\label{cl:immediate}
 			If $y,S(y)$ are not elements of $A$, then $h(y) \neq h(S(y))$. 
 		\end{claim}
 			
 		The next lemma will be sufficient to finish the proof of Proposition \ref{pr:yeshomo}.
 		\begin{lemma}
 			\label{l:inter}
 			For each $x \in [\N]^\N$ the set $A \cap \ran(H_x)$ is finite.  
 		\end{lemma}	
 	
 		\begin{proof}
 			Let $n$ be such that $x(0),x(1) < f'(n)$. Then there is an $l$ with $S^l(x)(0) \not \in [0,f'(n))$. Then for every $j$ with $j/2 > l-2$, we have that $1 \leq |H_x(j) \cap [0,f'(n))|\leq 2$, since this set is either only $\{x(1)\}$ or $\{x(0),x(1)\}$. In particular, the first interval determined by $f'$ which intersects $H_x(j)$ contains at most these two elements, so $H_x(j) \not \in A$. 
 		\end{proof}
 			Thus, the combination of Lemma \ref{l:inter} and Claim \ref{cl:immediate}
 			yields that $\Psi_f$ is a homomoprhism to $H_0$. 		
 	\end{proof}
 	\begin{proof}[Proof of Theorem \ref{t:hypercomplex}]
 		Now, Theorem \ref{t:hypercomplex} follows from Propositions \ref{pr:nohomo}, \ref{pr:yeshomo} and Theorem \ref{t:complexity}.
 	\end{proof}

 	\subsection{Fields of characteristic $p$}
 	
 	\label{ss:pchar}
 	A straightforward modification of the result in this section will be needed for general finite fields. Fix a field $\mathbb{F}$ of characteristic $p$.

 	\begin{definition}
 		Let $x \in \mathbb{F}^\N$. We say that $x$ \emph{eventually weakly $p$-alternates}, if there is some $i_0$, such that for all $i \geq i_0$ we have that the sequence \[\sum_{0 \leq j \leq p-1} x(pi+j)=1.\]
 		
 		Let $H^p_0$ be the $\mathbb{N}$-dimensional Borel hypergraph on the set $\mathbb{F}$ containing the hyperedges that eventually weakly $p$-alternate. 
 	\end{definition}

 	Now we have the following theorem.
 	
 	\begin{theorem}
 		\label{t:hypercomplexp}
 		The collection of $\N$-dimensional Borel hypergraphs admitting a homomorphism to $H^p_0$ is $\mathbf{\Sigma}^1_2$-complete. In fact, $CSP_B(H_0)$ is uniformly $\mathbf{\Sigma}^1_2$-hard. 
 	\end{theorem}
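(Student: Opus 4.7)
The plan is to mirror the proof of Theorem \ref{t:hypercomplex} with the natural adaptations to characteristic $p$. The key observation that replaces the binary obstruction is that $pc=0\neq 1$ for every $c\in\mathbb{F}$, so a constant map into $\mathbb{F}$ cannot be a homomorphism to $H_0^p$---and that constancy is precisely what the Galvin-Prikry theorem forces any Borel map into $\mathbb{F}$ to be along some $[y]^\N$.

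First, I would define a Borel hypergraph $\mathcal{H}^p$ on $[\N]^\N$ by
\[H_x^p(pi+j)=\{x(j),x(j+1),\ldots,x(p-1)\}\cup S^{i+p}(x)\]
for all $i\in\N$ and $0\leq j\leq p-1$, so that $H_x^p(pi+j+1)=S(H_x^p(pi+j))$ holds inside each block of $p$ coordinates. The non-existence of a Borel homomorphism $\mathcal{H}^p\to H_0^p$ then follows as in Proposition \ref{pr:nohomo}: a Borel $h:[\N]^\N\to\mathbb{F}$ is constant---say with value $c$---on some $[y]^\N$ by Galvin-Prikry, and $\ran(H_y^p)\subseteq[y]^\N$ yields $\sum_{j=0}^{p-1}h(H_y^p(pi+j))=pc=0\neq 1$ for every $i$.

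For the positive direction, I would take $f'(n)=f(n^2)$ (with $f'(0)=0$) and set
\[A=\{y\in\mc{D}_f:|y\cap[f'(n),f'(n+1))|=p+1,\text{ where $n$ is minimal with $y\cap[f'(n),f'(n+1))\neq\emptyset$}\}.\]
Claim \ref{cl:easy} generalises unchanged---if $x\in\mc{D}_f$ had $|x\cap[f'(n),f'(n+1))|\leq p$ for every $n$, then $x(pn)\geq f'(n)=f(n^2)$, and because $n^2$ grows faster than $pn$ this contradicts $x\leq^\infty f$. Hence $l(y)$, the minimal $l$ with $S^l(y)\in A$, is finite on $\mc{D}_f$. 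I would then fix a $p$-periodic $g:\N\to\mathbb{F}$ with $\sum_{j=0}^{p-1}g(j)=1$ (for instance $g(l)=1$ iff $p\mid l$) and define $\Psi_f(y)=g(l(y))$. The replacement for Claim \ref{cl:immediate} is that if $y,S(y),\ldots,S^{p-1}(y)\notin A$ then $l(y)\geq p$ and the values $l(S^j(y))=l(y)-j$ cover $p$ consecutive integers, so $\sum_{j=0}^{p-1}\Psi_f(S^j(y))$ is a full-period sum of $g$, namely $1$. Lemma \ref{l:inter} transposes verbatim: fixing $n$ with $x(0),\ldots,x(p-1)<f'(n)$, for all large $i$ the first non-empty $f'$-interval meeting $H_x^p(pi+j')$ contains only a subset of $\{x(j'),\ldots,x(p-1)\}$, hence at most $p<p+1$ elements, so $H_x^p(pi+j')\notin A$. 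Plugging the resulting $\Psi$ into Theorem \ref{t:complexity} finishes the proof.

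The main obstacle, really a bookkeeping matter, is to simultaneously fit the $p$-ary shape of the hyperedges with a coloring $g$ whose period sums to $1$ in $\mathbb{F}$: the shift relation $H_x^p(pi+j+1)=S(H_x^p(pi+j))$ inside each block must be matched to the $p$-periodicity of $g$. Once those two ingredients are aligned, the remaining counting lemmas transfer mechanically from the binary case.
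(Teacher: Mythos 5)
Your proposal is correct and follows the same route as the paper's own (very terse) sketch: you define the hypergraph $\mathcal{H}^p$ with the identical hyperedges $H_x(pi+j)=\{x(j),\dots,x(p-1)\}\cup S^{i+p}(x)$, use Galvin--Prikry together with $pc=0\neq 1$ for the obstruction, take the same witness set $A_p$ (first nonempty $f'$-interval containing exactly $p+1$ points), and your choice $g(l)=1$ iff $p\mid l$ gives precisely the paper's $\Psi_f$. The paper merely states ``it is easy to check that this works,'' whereas you correctly spell out the two transfer lemmas---the $p$-step analogue of Claim \ref{cl:immediate} via full-period sums of $g$ over the consecutive integers $l(y),l(y)-1,\dots,l(y)-p+1$, and the finiteness of $A_p\cap\ran(H_x)$---so no gap.
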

 	
 	Let us quickly indicate how to show this theorem. The corresponding hypergraph $\mathcal{H}^p$ is defined as follows:
 	\begin{definition}
 		For $x \in [\N]^\N$ let $H_x$  be a hyperedge defined as 
 		\[H_x(pi+j)=\{x(j),\dots,x(p-1)\} \cup S^{i+p}(x),\]
 		
 		and $\mc{H}^p=\{H_x:x \in [\N]^\N\}.$
 	\end{definition}
 	The analogue of Proposition \ref{pr:nohomo} clearly holds. As for Proposition \ref{pr:yeshomo}, one defines the set
 	\[A_p=\{x\in  \mc{D}_f: |x\cap [f'(n),f'(n+1))|=p+1,\]\[ \text{ where $n$ is minimal with $|x\cap [f'(n),f'(n+1))| \neq \emptyset$}\}.\] 
 	and the mapping $\Psi_f:\mc{D}_f \to 2$ by letting
 	\[
 	\Psi_f(x)=
 	\begin{cases}
 	1, & \text{if the minimal $l$ with }S^l(x) \in A \text{ is divisible by $p$}\\
 	0, & \text{otherwise.}
 	\end{cases}
 	\]
 	It is easy to check that this works.

    \section{The gadgets}
   \label{s:gadget}
   In this section we prove that the previously defined hypergraph coloring problems can be encoded using systems of linear equations. 
   \begin{theorem}
   	\label{t:reduction}
   	To each $\N$-dimensional Borel hypergraph $\mathcal{H}$ we can associate a Borel system of linear equations $A_\mc{H}$ over $\mathbb{F}_2$ so that $A_\mc{H}$ admits a Borel solution, if an only if $\mathcal{H}$ admits a Borel homomorphism to $H_0$. In fact, $CSP_B(H_0)$ uniformly reduces to $CSP_B(\mathbb{F}_2)$. 
   \end{theorem}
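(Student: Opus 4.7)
The plan is to attach, to each hyperedge $e = (x^e_i)_{i \in \mathbb{N}}$ of $\mathcal{H}$, an infinite gadget modeled on the $E_0$-coboundary structure on $2^{\mathbb{N}}$. The system $A_{\mathcal{H}}$ will have the original variables $c(x)$ for $x \in X$ together with auxiliary variables $v^e_z$ indexed by $(e, z) \in \mathcal{H} \times 2^{\mathbb{N}}$, and the \emph{cocycle equations}
\[ v^e_z + v^e_{z \oplus e_n} + c(x^e_{2n}) + c(x^e_{2n+1}) = 1 \]
for every $(e, z, n) \in \mathcal{H} \times 2^{\mathbb{N}} \times \mathbb{N}$, where $e_n \in 2^{\mathbb{N}}$ denotes the $n$-th coordinate vector. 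This is a Borel system of linear equations over $\mathbb{F}_2$ and the construction is uniform in $\mathcal{H}$, so Proposition~\ref{p:prel} will apply.

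For the easy direction, suppose $c : X \to \{0, 1\}$ is a Borel homomorphism $\mathcal{H} \to H_0$. Then for each $e$ the set $T_e := \{n : c(x^e_{2n}) + c(x^e_{2n+1}) = 0\}$ is finite, and the assignment $v^e(z) := \sum_{n \in T_e} z(n) \pmod 2$ is Borel in $(e, z)$ as a pointwise limit of finite partial sums. A direct computation shows $v^e(z) + v^e(z \oplus e_n) = 1$ if $n \in T_e$ and $0$ otherwise, which is exactly $c(x^e_{2n}) + c(x^e_{2n+1}) + 1$, so the cocycle equations hold.

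For the hard direction, suppose $(c, v)$ is a Borel solution of $A_{\mathcal{H}}$. For each $e$, the Borel slice $v^e : 2^{\mathbb{N}} \to \{0, 1\}$ satisfies $v^e(z) + v^e(z \oplus e_n) = d^e_n := c(x^e_{2n}) + c(x^e_{2n+1}) + 1$. I claim $T_e := \{n : d^e_n = 1\}$ is finite for every $e$, which is equivalent to $(c(x^e_i))_i$ being eventually weakly alternating. Otherwise, enumerate $T_e = \{t_0 < t_1 < \dots\}$ and define the continuous map $\phi : 2^{\mathbb{N}} \to 2^{\mathbb{N}}$ by $\phi(y)(t_k) = y(k)$ for $k \in \mathbb{N}$ and $\phi(y)(m) = 0$ for $m \notin T_e$. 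Every $\mathbb{G}_0$-edge $\{y_0, y_1\}$ (whose endpoints differ at some position $k$) is sent to a pair $\{\phi(y_0), \phi(y_1)\}$ differing at position $t_k \in T_e$; applying the cocycle yields $v^e(\phi(y_0)) \neq v^e(\phi(y_1))$. Hence $v^e \circ \phi$ would be a Borel $2$-coloring of $\mathbb{G}_0$, contradicting the Kechris--Solecki--Todor\v{c}evi\'c theorem.

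The main obstacle is identifying the right gadget. The cocycle formulation works because the solvability of the system flips exactly at the boundary between $T_e$ finite and $T_e$ infinite: in the finite case a Borel section is produced as a linear polynomial in finitely many coordinates of $z$, while in the infinite case the standard continuous reduction to $\mathbb{G}_0$ produces the Borel obstruction. Uniformity in the hypergraph $\mathcal{H}$ is transparent from the construction.
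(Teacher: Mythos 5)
Your proof is correct, but it builds a genuinely different gadget than the paper. The paper constructs a single auxiliary system $\mathcal{A}_0$ on the space $([\N]^\N \times 2^\N)\sqcup \N$, using a dense sequence $(s_n)$ to wire $\mathbb{G}_0$-type edges into the gadget; the extra $[\N]^\N$-coordinate $r$ exists precisely to re-index, so that when the ``bad set'' $\{i : \phi(x_{2i})=\phi(x_{2i+1})\}$ equals $r$, the $r$-slice becomes a literal copy of $\mathbb{G}_0$ with the distinguished constraints switched on, and a Baire-category argument kills the Borel solution. You instead index the auxiliary variables directly by $\mathcal{H}\times 2^\N$ and impose the full $E_0$-coboundary (cocycle) system, with one equation for every coordinate flip, not just the dense-sequence flips of $\mathbb{G}_0$. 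Because you impose constraints at every coordinate, you do not need the $[\N]^\N$-parameter; instead you recover the $\mathbb{G}_0$ obstruction a posteriori, by continuously embedding $2^\N$ into the coordinates of $T_e=\{n : d^e_n=1\}$ whenever $T_e$ is infinite, and then citing the Kechris--Solecki--Todor\v{c}evi\'c theorem as a black box. The solvable direction is handled by the explicit linear primitive $v^e(z)=\sum_{n\in T_e}z(n)$, which is the analogue of the paper's $\psi_r$ built from the finite acyclic graph on $2^{i_0}$. Both approaches have the same descriptive complexity and both produce a uniform Borel reduction; yours eliminates the $[\N]^\N$-parameter at the cost of using a richer (denser) equation set per hyperedge, and it packages the negative direction as a clean reduction to KST rather than reproving it inline. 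One small presentational remark: the paper isolates the gadget as a stand-alone statement (Proposition~\ref{p:all}) so that the characteristic-$p$ case can reuse the template; your write-up proves the reduction directly, which is fine for $\mathbb{F}_2$ but would need to be refactored similarly to treat general $\mathbb{F}$.
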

   
   In order to prove this theorem, we will need an infinitary gadget. This relies on a strong failure of compactness in the Borel case. 
   
   \begin{proposition}[Gadget construction]
   	\label{p:all}
   	There exists a Borel system of linear equations over $\mathbb{F}_2$ on some space $A$, denoted by $\mathcal{A}_0$, which has a distinguished collection $(x_i)_{i \in \N}$ of variables so that a map $\phi:\{x_i: i \in \N\} \to \mathbb{F}_2$ can be extended to a Borel solution to $\mathcal{A}_0$ if and only if $(\phi(x_i))_i$ eventually weakly alternates. 
   	
   	Moreover, this can be done in a uniform manner, that is, there exists a Borel map $\bar{\phi}:2^\N \times A \to \mathbb{F}_2$, so that if $z=(\phi(x_i))_i$ eventually weakly alternates then $\bar{\phi}_z$ extends $(\phi(x_i))_i$ to a solution. 
   \end{proposition}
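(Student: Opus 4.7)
The plan is to build $\mathcal{A}_0$ on the standard Borel space $A = (2^\N \times S_\infty) \sqcup \N$, where $S_\infty \subseteq \N^\N$ is the Polish group of permutations of $\N$, identifying the distinguished variables $x_i$ with the $i$-th point of the $\N$-summand. Fix a sequence $(s_n)_{n \in \N}$ with $s_n \in 2^n$ such that every $t \in 2^{<\N}$ is extended by infinitely many $s_n$, and let $\Gzero = \bigcup_n M_n$ be the Kechris--Solecki--\Todorcevic graph on $2^\N$, with $n$-th matching $M_n = \{\{s_n \concatt 0 \concatt z,\, s_n \concatt 1 \concatt z\}: z \in 2^\N\}$. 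For each $(\sigma, n, z) \in S_\infty \times \N \times 2^\N$ the system $\mathcal{A}_0$ includes the equation
\[
c(s_n \concatt 0 \concatt z, \sigma) + c(s_n \concatt 1 \concatt z, \sigma) + x_{2\sigma(n)} + x_{2\sigma(n)+1} = 1,
\]
giving a Borel family of four-variable linear equations. For a partial assignment $\phi$, set $Z_\phi = \{i : \phi(x_{2i}) = \phi(x_{2i+1})\}$, so that $\phi$ eventually weakly alternates if and only if $Z_\phi$ is finite.

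For the forward direction, when $Z_\phi$ is finite I would define
\[
\bar{\phi}_\phi(a,\sigma) = \sum_{m \in \sigma^{-1}(Z_\phi)} a(m) \pmod 2, \qquad \bar{\phi}_\phi(x_i) = \phi(x_i),
\]
which is Borel because the sum has only $|Z_\phi|$ terms. Each equation holds by direct verification: the $c$-contribution equals $\mathbf{1}_{n \in \sigma^{-1}(Z_\phi)}$, since $s_n \concatt 0 \concatt z$ and $s_n \concatt 1 \concatt z$ differ only at coordinate $n$, and the right-hand side $1 + \phi(x_{2\sigma(n)}) + \phi(x_{2\sigma(n)+1})$ evaluates to $\mathbf{1}_{\sigma(n) \in Z_\phi}$ by the definition of $Z_\phi$. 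The uniform $\bar{\phi}: 2^\N \times A \to \mathbb{F}_2$ is obtained by the same formula, with $Z_z$ now a Borel function of the parameter $z \in 2^\N$ (and $\bar{\phi}_z$ defined arbitrarily when $Z_z$ is infinite).

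For the reverse direction, suppose $Z_\phi$ is infinite; a standard back-and-forth produces some $\sigma \in S_\infty$ for which $(s_n)_{n \in \sigma^{-1}(Z_\phi)}$ is still dense in $2^{<\N}$: enumerate $2^{<\N}$ as $(t_k)$ and $Z_\phi$ as $(z_k)$, inductively pick distinct $n_k$ with $s_{n_k}$ extending $t_k$, put $\sigma(n_k) = z_k$, and extend to a bijection of $\N$. If $c : A \to \mathbb{F}_2$ were a Borel solution, the slice $a \mapsto c(a, \sigma)$ would be Borel and, from the equations on this slice, would in particular be a proper $2$-coloring of $\bigcup_{n \in \sigma^{-1}(Z_\phi)} M_n$; the classical Kechris--Solecki--\Todorcevic argument, which uses only density of the defining sequence of nodes, excludes this. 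The main obstacle---and the reason for introducing the uncountable parameter $\sigma$---is precisely this robustness: since no single fixed correspondence between pairs and levels of $\Gzero$ can guarantee a dense selection for every infinite $Z_\phi$, one must run $\sigma$ through an uncountable Polish family so that every bad $\phi$ is caught by at least one slice.
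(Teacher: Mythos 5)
Your proposal is, at heart, the paper's construction: an uncountable Polish family of $\mathbb{G}_0$-type systems indexed so that for every infinite set $Z_\phi$ of ``bad'' indices some slice becomes an honest $\mathbb{G}_0$ $2$-coloring problem, which the Kechris--Solecki--Todor\v{c}evi\'c Baire-category argument then rules out. The differences from the paper are in the choice of parameter space and in the presentation of the ``yes'' direction. The paper indexes by $[\N]^\N$ rather than $S_\infty$: for $r\in[\N]^\N$ the equation $x_{(r,u)}+x_{(r,v)}+x_{2r(i)}+x_{2r(i)+1}=1$ (with $u,v$ split at level $i$) lets one simply take $r=Z_\phi$, and then \emph{every} level $i$ is constrained since $r(i)\in Z_\phi$ for all $i$, so the ``no'' direction is immediate with no density selection at all. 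With $S_\infty$ you are forced into the back-and-forth, and there one step is not quite automatic: ``extend to a bijection of $\N$'' requires $|\N\setminus\{n_k\}|=|\N\setminus Z_\phi|$, which does not come for free from a greedy choice of the $n_k$. You need to arrange it, e.g.\ by reserving an infinite set of levels to be skipped when $Z_\phi$ is co-infinite, and by noting that when $Z_\phi$ is co-finite any $\sigma\in S_\infty$ already works, since a co-finite subsequence of a dense sequence is still dense. That is an easy patch, but as written the step fails in the co-finite case. On the ``yes'' side your closed-form parity sum $\bar\phi(a,\sigma)=\sum_{m\in\sigma^{-1}(Z_\phi)}a(m)\ \mathrm{mod}\ 2$ is cleaner and more explicit than the paper's inductive $2$-coloring of the acyclic auxiliary graph on $2^{i_0}$; the paper's route, however, is the one that generalizes verbatim to characteristic $p$, where it invokes solvability of acyclic finite systems rather than an explicit formula.
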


   The below construction is a version of the construction of the graph $\mathbb{G}_0$ from the seminal paper of Kechris-Solecki-Todor\v{c}evi\'c \cite{KST}. Essentially, every vertex of $\mathbb{G}_0$ will serve as a variable, and to edges we associate pairs of distinguished variables. If $\phi(x_{2i}) \neq \phi(x_{2i+1})$, then the two variable along all corresponding edges must be equal, if not, they must be distinct. A single $\mathbb{G}_0$-type copy would ensure that there is no Borel homomorphism $\phi$, in which there are only finitely elements $i$ with $\phi(x_{2i})\neq \phi(x_{2i+1})$, and we construct one for every possible infinite subset of $i$'s (the first coordinate below).
   
   \begin{proof}[Proof of Proposition \ref{p:all}]
   	Fix a sequence $s_n \in 2^n$ which is \emph{dense}, that is, for every $t \in 2^{<\N}$ there exists an $n$ with $t \sqsubset s_n$. The underlying space $A$ of the systems of linear equations is going to be $\left([\N]^\N \times 2^\N\right) \sqcup \N$, where $\N$ will be the collection of distinguished variables.
   	
   	For $r \in [\N]^\N$, $u,v \in 2^\N, j \in 2, i \in \N$ add the equation \begin{equation}\label{eq} x_{(r,u)}+x_{(r,v)}+x_{2r(i)}+x_{2r(i)+1}=1\end{equation} if $u=s_i \concatt (j) \concatt c$ and $v=s_i \concatt (1-j) \concatt c$ for some $c \in 2^\N$. 
   	
   	\begin{claim}
   		\label{cl:firstno}
   		Assume that $\phi:\{x_i:i \in \N\} \to 2$ is a map so that there is an infinite set of $i$'s with $\phi(x_{2i})=\phi(x_{2i+1})$. Then $\phi$ does not extend to a Borel solution.  
   	\end{claim}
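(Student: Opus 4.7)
The plan is to argue by contradiction. Suppose $\phi$ did extend to a Borel solution $\bar{\phi}:A \to \mathbb{F}_2$ of $\mathcal{A}_0$. From $\bar{\phi}$ I will manufacture a Borel proper $2$-coloring of the Kechris-Solecki-Todor\v{c}evi\'c graph $\mathbb{G}_0$ on $2^\N$ associated with the fixed dense sequence $(s_n)$, which is impossible since $\chi_B(\mathbb{G}_0)$ is uncountable.

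To do so, set $J = \{j \in \N : \phi(x_{2j}) = \phi(x_{2j+1})\}$; by hypothesis $J$ is infinite, so it admits an increasing enumeration $r \in [\N]^\N$. I will look at the slice of $\bar{\phi}$ indexed by this $r$, namely the Borel map
\[ f_r : 2^\N \to \mathbb{F}_2, \qquad f_r(u) = \bar{\phi}(x_{(r,u)}). \]
For any $i \in \N$ and any $c \in 2^\N$, the pair $u = s_i \concatt (0) \concatt c$, $v = s_i \concatt (1) \concatt c$ is the generic $\mathbb{G}_0$-edge at level $i$, and equation \eqref{eq} evaluated at $\bar\phi$ forces
\[ f_r(u) + f_r(v) + \phi(x_{2r(i)}) + \phi(x_{2r(i)+1}) = 1 \]
in $\mathbb{F}_2$. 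Since $r(i) \in J$ by choice of $r$, the distinguished pair $x_{2r(i)}, x_{2r(i)+1}$ is monochromatic under $\phi$, so the last two summands cancel and we obtain $f_r(u) \neq f_r(v)$. Running this over all $i$ and all $c$ shows that $f_r$ is a Borel proper $2$-coloring of $\mathbb{G}_0$, delivering the desired contradiction.

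The only step that requires any insight is the choice of the parameter $r$: enumerating $J$ (rather than picking an arbitrary $r$) guarantees that every level $i$ of the $\mathbb{G}_0$-like gadget associated with $r$ becomes a "bad" level, so that the family of equations on the slice $\{x_{(r,u)} : u \in 2^\N\}$ collapses exactly onto the defining disequality constraints of $\mathbb{G}_0$. Once this reduction is spotted, there is no further obstacle; the argument is a direct invocation of the KST lower bound on $\chi_B(\mathbb{G}_0)$.
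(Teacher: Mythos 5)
Your proof is correct and follows essentially the same route as the paper: choose $r$ to enumerate the set of monochromatic indices, observe that the equations \eqref{eq} on the slice $\{x_{(r,u)}:u\in 2^\N\}$ then collapse to the $\mathbb{G}_0$ disequality constraints, and derive a contradiction. The only difference is cosmetic---you invoke the Kechris--Solecki--Todor\v{c}evi\'c lower bound on $\chi_B(\mathbb{G}_0)$ as a black box, whereas the paper inlines the underlying Baire category argument (local comeager constancy of $\bar\phi(x_{(r,\cdot)})$ on some $N_t$, density of $(s_n)$, and a generic edge inside $N_{s_i}$).
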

   	\begin{proof}
   		Let $r$ be the set of such $i$s, and assume that $\bar{\phi}$ is a Borel extension. Then there exists a $t \in 2^{<\N}$ and an $j \in \{0,1\}$ so that $N_t \setminus \bar{\phi}^{-1}(j)_{r}$ is meager in $N_t$. By density, there is an $i$ with $t \sqsubseteq s_i$. It follows by a standard Baire category argument that there are $u,v \in N_{s_i}$ and $c \in 2^\N$ such that $\bar{\phi}(x_{(r,u)})=\bar{\phi}(x_{(r,v)})$ and $u=s_i \concatt (0) \concatt c$ and $v=s_i \concatt (1) \concatt c$. But $\bar{\phi}(x_{2r(i)})=\bar{\phi}(x_{2r(i)+1})$, contradicting \eqref{eq}. 
   	\end{proof}
   	
   	\begin{claim}
   		\label{cl:firstyes} Assume that $\phi:\{x_i:i \in \N\} \to 2$ is a map so that $(\phi(x_i))_{i \in \N}$ eventually weakly alternates. Then $\phi$ extends to a Borel solution. 
   	\end{claim}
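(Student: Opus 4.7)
The plan is to interpret equation~\eqref{eq} factorwise in the parameter $r$. For each fixed $r\in[\N]^\N$ I would seek a map $\phi_r\colon 2^\N \to \mathbb{F}_2$ such that, together with $\bar\phi(x_k) := \phi(x_k)$ on the distinguished variables and $\bar\phi(x_{(r,u)}) := \phi_r(u)$, the equations are satisfied. For a pair $u = s_i \concatt (j) \concatt c$, $v = s_i \concatt (1-j) \concatt c$ this requirement becomes
\[
\phi_r(u) + \phi_r(v) = 1 + \phi(x_{2r(i)}) + \phi(x_{2r(i)+1}),
\]
that is, $\phi_r(u) \neq \phi_r(v)$ precisely when $\phi(x_{2r(i)}) = \phi(x_{2r(i)+1})$. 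Crucially, only the single coordinate $i$ at which $u$ and $v$ disagree will matter.

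The key observation making the construction possible is that the eventual weak alternation hypothesis makes the ``bad'' set of indices finite. Set $I := \{i \in \N : \phi(x_{2i}) = \phi(x_{2i+1})\}$; by assumption $I$ is finite. For each $r$ define $J_r := r^{-1}(I)$, which is also finite since $r$ is injective. Then I would simply put
\[
\phi_r(u) := \sum_{i \in J_r} u(i) \pmod 2,
\]
a continuous (in fact locally constant) function depending on only finitely many coordinates of $u$. If $u$ and $v$ differ only in position $i$, then $\phi_r(u) + \phi_r(v) = 1$ iff $i \in J_r$ iff $r(i) \in I$ iff $\phi(x_{2r(i)}) = \phi(x_{2r(i)+1})$, which is exactly the required condition.

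For the uniform statement, one encodes $\phi$ by $z \in 2^\N$ with $z(k) = \phi(x_k)$; let $I(z) := \{i : z(2i) = z(2i+1)\}$ and $J(z,r) := r^{-1}(I(z))$, and define
\[
\bar\phi(z, x_k) := z(k), \qquad \bar\phi(z, (r,u)) := \sum_{i \in J(z,r)} u(i) \bmod 2
\]
when $J(z,r)$ is finite, and $0$ otherwise. Finiteness of $J(z,r)$ is a Borel condition on $(z,r)$, and on the finite case the expression is a finite Borel function of $u$, so $\bar\phi$ is Borel on $2^\N \times A$. When $z$ eventually weakly alternates, every $J(z,r)$ is finite and the previous paragraph shows $\bar\phi_z$ is a Borel solution extending $\phi$. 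I do not foresee any serious obstacle beyond the conceptual point that these $\mathbb{G}_0$-type equations couple only pairs of elements of $2^\N$ differing at a single coordinate, which reduces the problem to choosing a linear functional on $2^\N$ sensitive to exactly the finitely many ``bad'' coordinates indexed by $J_r$.
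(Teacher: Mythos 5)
Your proof is correct, and it takes a genuinely different route from the paper's. The paper proceeds by fixing $i_0$ with $\phi(x_{2i})\neq\phi(x_{2i+1})$ for $i\ge i_0$, introducing an auxiliary graph $G$ on the finite set $2^{i_0}$ (connecting $u,v$ when they are the $\mathbb{G}_0$-type pair for some $n< i_0$), proving by induction that $G$ is acyclic, and then greedily $2$-coloring $G$ so that the required parity constraints (eq2) hold; the extension $\bar\phi(x_{(r,u)})=\psi_r(u\restriction i_0)$ is then automatically continuous. You instead observe that the constraint on the pair $(u,v)$ depends only on the single coordinate $i$ where they differ, and directly write down the closed-form linear (parity-check) functional $\phi_r(u)=\sum_{i\in J_r}u(i)\bmod 2$ with $J_r=r^{-1}(I)$ finite. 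Since $u,v$ differ exactly at position $i$, $\phi_r(u)+\phi_r(v)=[i\in J_r]=[\,\phi(x_{2r(i)})=\phi(x_{2r(i)+1})\,]$, which is precisely what the equation demands. This eliminates both the acyclicity lemma and the inductive coloring, yields a function depending on only finitely many coordinates of $u$ (hence continuous in $u$) in one stroke, and makes the uniform Borel dependence on $(z,r,u)$ immediate by stratifying over the Borel sets $\{(z,r):J(z,r)\subseteq\{0,\dots,N\}\}$. One small remark: your linear functional exploits $\mathbb{F}_2$ specifically (the sum $\sum_{j<2} j=1$ is nonzero); for the characteristic-$p$ analogue (\cref{cl:firstyesp}) a naive coordinate-sum vanishes since $\sum_{j<p}j\equiv 0\pmod p$ for odd $p$, so one would need a slightly different coordinate-wise summand (e.g., an indicator of the digit being $0$, scaled appropriately), whereas the paper's acyclic-system argument via \cref{cl:solution} covers all $p$ uniformly. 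But for the claim as stated over $\mathbb{F}_2$, your argument is a clean and arguably preferable alternative.
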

   	\begin{proof}
   		Let $i_0$ be such that $\phi(x_{2i})\neq\phi(x_{2i+1})$ for all $i \geq i_0$. We will commit to defining an extension which is constant on sets of the form $\{r\} \times N_t$ whenever $t \in 2^{i_0}$, and only depends on $r(j)$ for $j \leq i_0$, ensuring that the resulting map is continuous. Note that this commitment is consistent with 
   		\eqref{eq}, since $r(i) \geq i$. 
   		
   		Now, define a graph $G$ on $2^{i_0}$ by connecting $u,v$ if there is an $n$ with $u=s_n \concatt (\varepsilon) \concatt c$ and $v=s_n \concatt (1-\varepsilon) \concatt c$ with $c \in 2^{i_0-n-1}$. An easy induction shows that this graph is acyclic. 
   		
   		Fix an $r \in [\N]^\N$. Starting from a vertex of $G$, coloring inductively the neighbors, we can define a coloring $\psi_r: 2^{i_0} \to 2$ so that for $(u,v) \in 2^{i_0}$ we have \begin{equation}\label{eq2}\psi_r(u)=\psi_r(v) \iff \phi(x_{2r(i)}) \neq \phi(x_{2r(i)+1}),\end{equation}
   		where $u=s_i \concatt (\varepsilon) \concatt c$ and $v=s_i \concatt (1-\varepsilon) \concatt c$. Clearly, there is exactly one such an $i$, thus from acyclicity of $G$ we get that such a coloring can be constructed.  
   		
   		Finally, for $u \in 2^\N$ let $\bar{\phi}(x_{(r,u)})=\psi_r(\restriction{u}{i_0})$. It is clear that $\bar{\phi}$ is continuous, and we check that it is indeed a solution: indeed, assume that \[x_{(r,u)}+x_{(r,v)}+x_{2r(i)}+x_{2r(i)+1}=1\] is in $\mathcal{A}_0$ with $u=s_i \concatt (0) \concatt c$ and $v=s_i \concatt (1) \concatt c$. If $i>i_0$, then \[\bar{\phi}(x_{(r,u)})=\psi_r(\restriction{u}{i_0})=\psi_r(\restriction{v}{i_0})=\bar{\phi}(x_{(r,v)}),\] so together with $\phi(x_{2r(i)})\neq\phi(x_{2r(i)+1})$ we have \eqref{eq}.
   		In case $i \leq i_0$ \eqref{eq2} guarantees that  \eqref{eq} holds.

   	\end{proof}
   	
   	It is clear from the construction of the coloring in Claim \ref{cl:firstyes} that it is performed in a uniform manner in $z=(\phi(x_i))_{i}$, in fact the extension only depends on the first $2i_0$ coordinates $z$, yielding the second claim of Proposition \ref{p:all}.
   \end{proof}
   
   All that is left to prove Theorem \ref{t:reduction} is to glue $\mathcal{A}_0$ type gadgets along each hyperedge, and check that this construction works.
   
   \begin{proof}[Proof Theorem \ref{t:reduction}]
   	Let $\mathcal{H}$ be an $\N$-dimensional Borel hypergraph on a space $X$, and identify $\mc{H}$ with its edge set $\mc{H} \subseteq X^\N$. Let $\mathcal{A}_0$ be the system defined in Proposition \ref{p:all} on the space $A=Y_0 \sqcup \N$. We define a Borel system of linear equations $\mathcal{A}_H$ on the space $X \sqcup X^\N \times A (=X\sqcup (X^\mathbb{N}\times Y_0)\sqcup (X^\mathbb{N}\times \mathbb{N}))$ by adding for all $H\in \mc{H}$ the equation $\sum x_{(H,a_j)}=1$, whenever $\sum x_{a_j}=1$ is in $\mathcal{A}_0$, and moreover,  for all $a \in X$
   	\begin{equation}
   	x_{a}=x_{(H,i)}  \text{, if } a=H(i). \label{eq3}
   	\end{equation}
   	
   	Assume that $\mc{A}_\mc{H}$ has a Borel solution $\phi$. Then $\restriction{\phi}{X}$ is a Borel map to $\{0,1\}$ and if $H \in \mc{H}$ then $(\phi(x_{H(i)}))_i$ must be eventually weakly alternating: if it was not, then $\restriction{\phi}{\{H\} \times A}$ would be a solution of $\mathcal{A}_0$ where the distinguished variables, $(x_{(H,i)})_{i}$ get values that are not eventually alternating, contradicting Proposition \ref{p:all}. Thus, $\mc{H}$ admits a Borel homomorphism to $H_0$. 
   	
   	Conversely, if $\mc{H}$ admits a Borel homomorphism $\psi$ to $H_0$, then we can define a Borel solution $\phi$ to $\mc{A}_H$ by letting
   	$\phi(x_a)=\psi(a)$ and $\phi(x_{(H,y)})=\bar{\phi}_{z}(y)$, where $z=\psi(H(i))_{i \in \N}$ and the map $\bar{\phi}_z$ is from Proposition \ref{p:all}. Since this map is Borel, the resulting $\phi$ is going to be Borel as well. By its definition, $\phi$ obeys all the equations coming from $\mathcal{A}_0$, and moreover \eqref{eq3} is satisfied, as whenever $a=H(i)$ and $z=(\psi(H(i)))_{i}$ we have
   	\[\phi(x_a)=\psi(a)=\psi(H(i))=z(i)=\bar{\phi}_z(x_i)=\phi(x_{(H,i)}).\]
   	
   	It is clear from the construction of $\mathcal{H} \mapsto \mathcal{H}_\mathcal{A}$ is a uniform Borel reduction. 
   \end{proof}
   Now, a combination of Theorem \ref{t:hypercomplex} and Theorem \ref{t:reduction} yields our main result for $\mathbb{F}_2$:
   \begin{corollary} $CSP_{B}(\mathbb{F}_2)$ is uniformly $\mathbf{\Sigma}^1_2$-complete.  
   \end{corollary}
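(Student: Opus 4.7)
The plan is to derive the corollary by simply chaining the two main results already established. First I would observe that membership in $\mathbf{\Sigma}^1_2$ is automatic: $\mathbb{F}_2$ (viewed as the finite Borel relational structure encoding linear equations over $\mathbb{F}_2$, as described in the introduction) is a Borel structure, so Proposition \ref{pr:homo} directly gives that $CSP_B(\mathbb{F}_2)$ is in $\mathbf{\Sigma}^1_2$. Only the uniform $\mathbf{\Sigma}^1_2$-hardness remains.

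For the hardness, I would invoke Theorem \ref{t:hypercomplex}, which asserts that $CSP_B(H_0)$ is uniformly $\mathbf{\Sigma}^1_2$-hard, and then apply Theorem \ref{t:reduction}, which furnishes a uniform Borel reduction from $CSP_B(H_0)$ to $CSP_B(\mathbb{F}_2)$. Proposition \ref{p:prel} guarantees that uniform $\mathbf{\Sigma}^1_2$-hardness is preserved under uniform Borel reductions, so $CSP_B(\mathbb{F}_2)$ inherits uniform $\mathbf{\Sigma}^1_2$-hardness from $CSP_B(H_0)$. Combining with the upper bound yields uniform $\mathbf{\Sigma}^1_2$-completeness.

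There is essentially no genuine obstacle at the level of the corollary itself, since all of the work has been packaged into the two theorems and the preservation lemma. If anything, the only point to double-check is purely bookkeeping: verifying that the parameter spaces $Z$ used in the witness to uniform hardness of $CSP_B(H_0)$ (coming from Theorem \ref{t:complexity} applied with $\mc{H}$ and $\Psi$ from Propositions \ref{pr:nohomo} and \ref{pr:yeshomo}) pass through the uniform reduction $\mc{H} \mapsto \mc{A}_\mc{H}$ of Theorem \ref{t:reduction} to give the same $Z$-parametrized Borel family $B'$ witnessing uniform $\mathbf{\Sigma}^1_2$-hardness of $CSP_B(\mathbb{F}_2)$, but this is already what the statement of Proposition \ref{p:prel} encapsulates. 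Thus the proof reduces to a two-line invocation of the preceding theorems.
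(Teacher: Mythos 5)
Your proposal is correct and matches the paper's own argument: the corollary is stated precisely as "a combination of Theorem \ref{t:hypercomplex} and Theorem \ref{t:reduction}," with Proposition \ref{p:prel} supplying the preservation of uniform hardness under uniform reduction and Proposition \ref{pr:homo} supplying the $\mathbf{\Sigma}^1_2$ upper bound. There is nothing further to add.
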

   
   \subsection{Fields of characteristic $p$ and the main theorem.}
   Fix again a finite field $\mathbb{F}$ of characteristic $p$. We indicate the required modifications.
   \begin{theorem}
   	\label{t:reductionp}
   	To each $\N$-dimensional Borel hypergraph $\mathcal{H}$ we can associate a Borel system of linear equations $A_\mc{H}$ over $\mathbb{F}$ so that $A_\mc{H}$ admits a Borel solution, if an only if $\mathcal{H}$ admits a Borel homomorphism to $H^p_0$. In fact, $CSP_B(H^p_0)$ uniformly Borel reduces to $CSP_B(\mathbb{F})$. 
   \end{theorem}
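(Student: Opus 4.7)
The plan is to follow the two-step blueprint of Theorem \ref{t:reduction} verbatim: first, establish a characteristic-$p$ analog $\mathcal{A}_0^p$ of the gadget from Proposition \ref{p:all} that detects eventually weakly $p$-alternating sequences of distinguished variables, and then glue copies of $\mathcal{A}_0^p$ along the hyperedges of $\mathcal{H}$. The gluing step transfers essentially word-for-word from Theorem \ref{t:reduction} with $\mathbb{F}_2$ replaced by $\mathbb{F}$ and $H_0$ replaced by $H_0^p$ throughout (including the identification equation \eqref{eq3}), so the real content lies in constructing $\mathcal{A}_0^p$.

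For the gadget I would work on the space $([\N]^\N \times p^\N) \sqcup \N$, fix a dense sequence $s_n \in p^n$, and for each $r \in [\N]^\N$, each $i \in \N$, each pair of distinct $a,b \in \{0,\ldots,p-1\}$ (viewed as elements of the prime subfield $\mathbb{F}_p \subseteq \mathbb{F}$), and each $c \in p^\N$, impose the equation
\[
x_{(r,u)} - x_{(r,v)} = (a-b)\Big(1 - \sum_{j=0}^{p-1} x_{pr(i)+j}\Big),
\]
with $u = s_i\concatt(a)\concatt c$ and $v = s_i\concatt(b)\concatt c$. The negative direction is a Baire category argument essentially identical to Claim \ref{cl:firstno}: if the distinguished sequence fails to $p$-alternate on an infinite set $r$, then for any Borel extension $\bar{\phi}$ some level set $\bar{\phi}^{-1}(\beta)_r$ is comeager on a basic clopen $N_t$; density gives $i$ with $s_i \sqsupseteq t$, and a standard category argument produces $c \in p^\N$ with $u = s_i\concatt(0)\concatt c$ and $v = s_i\concatt(1)\concatt c$ both sent to $\beta$, forcing $\sum_j x_{pr(i)+j}=1$ and contradicting $r(i)\in r$.

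The positive direction is where I expect to depart from the $\mathbb{F}_2$ proof, and this is the main point of care: the natural graph on $p^{i_0}$ that would replace the graph $G$ in Claim \ref{cl:firstyes} is no longer a tree when $p>2$, so I would avoid the acyclicity-based induction and instead write down a direct linear formula. If $(\phi(x_i))_i$ is $p$-alternating from $i_0$ onward, set $\alpha_i(r) = 1 - \sum_{j=0}^{p-1}\phi(x_{pr(i)+j})$ (so $\alpha_i(r)=0$ whenever $r(i)\geq i_0$) and define
\[
\bar{\phi}(x_{(r,u)}) = \sum_{i<i_0} u(i)\,\alpha_i(r).
\]
For any equation indexed by $(r,i,a,b,c)$ the left-hand side evaluates by linearity to $(u(i)-v(i))\alpha_i(r) = (a-b)\alpha_i(r)$ when $i<i_0$, matching the right-hand side; when $i \geq i_0$ both $u$ and $v$ agree on the first $i_0$ coordinates (so the LHS is $0$) and $r(i) \geq i_0$ forces $\alpha_i(r) = 0$ (so the RHS is $0$). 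The map is visibly Borel, in fact continuous, in $\phi$, yielding the uniform extension analogous to the second clause of Proposition \ref{p:all}.

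Combining $\mathcal{A}_0^p$ with the gluing construction from Theorem \ref{t:reduction} and invoking Theorem \ref{t:hypercomplexp} then completes the uniform Borel reduction $CSP_B(H_0^p) \leq CSP_B(\mathbb{F})$, and hence the main theorem. The principal obstacle, as noted, is the positive direction of the gadget: the closed-form linear definition of $\bar{\phi}$ is what makes consistency of the whole system transparent, and without it one would be forced to verify cycle-closure conditions on the complete $p$-partite structure at each level.
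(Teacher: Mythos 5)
Your proof is correct, and it departs from the paper's in a meaningful way at the gadget construction, so let me compare. The paper's $\mathcal{A}_0^p$ (Proposition~\ref{p:allp}) imposes, for each $r,i,c$, a single equation of arity $2p$,
\[
\sum_{j<p} x_{(r,u_j)} + \sum_{j<p} x_{pr(i)+j} = 1, \qquad u_j = s_i\concatt(j)\concatt c,
\]
relying on characteristic $p$ to turn ``all $x_{(r,u_j)}$ equal'' into a forcing of $\sum_j x_{pr(i)+j}=1$. Its positive direction replaces the graph $G$ of Claim~\ref{cl:firstyes} by a $p$-uniform hypergraph on $p^{i_0}$, proves that hypergraph is Berge acyclic, and invokes a greedy-solvability subclaim for acyclic linear systems. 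You instead use the family of pairwise equations $x_{(r,u)} - x_{(r,v)} = (a-b)(1-\sum_j x_{pr(i)+j})$ over all distinct $a,b<p$, which forces the tuple $(x_{(r,u_j)})_j$ to be an arithmetic progression with common difference $\alpha_i(r)$ rather than merely to sum to $\alpha_i(r)$. The negative (Baire category) direction is essentially identical in both versions and you reproduce it correctly; note that you need only the single pair $(a,b)=(0,1)$ there, which is marginally cleaner than the paper's need to locate $p$ points simultaneously in the comeager fiber. Your real contribution is the closed-form extension $\bar{\phi}(x_{(r,u)}) = \sum_{i<i_0} u(i)\alpha_i(r)$, which verifies the equations by linearity in two lines and entirely bypasses any acyclicity argument; this is what the paper buys more laboriously with Subclaim~\ref{cl:solution}. (It is worth remarking that your formula solves only your system, not the paper's: for odd $p$ one has $\sum_{j<p} j = p(p-1)/2 \equiv 0$, so it would not satisfy the paper's sum-equation; the explicit formula and the choice of pairwise constraints are tied together.) The gluing step transfers verbatim as you say, and combining with Theorem~\ref{t:hypercomplexp} via Proposition~\ref{p:prel} gives the uniform reduction. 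Both proofs are sound; yours trades the more symmetric single-equation gadget and its acyclicity argument for a redundant but linearly transparent system, which I think is the cleaner route.
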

   
   The gadget now looks the following way. 
   
   \begin{proposition}[Gadget construction]
   	\label{p:allp}
   	There exists a Borel system of linear equations over $\mathbb{F}$, $\mathcal{A}^p_0$ which has a distinguished collection $(x_i)_{i \in \N}$ of variables so that a map $\phi:\{x_i: i \in \N\} \to \mathbb{F}$ can be extended to a Borel solution to $\mathcal{A}^p_0$ if and only if $(\phi(x_i))_i$ eventually weakly $p$-alternates. 
   	
   	Moreover, this can be done in a uniform manner, that is, there exists a Borel map $\bar{\phi}:2^\N \times A \to \mathbb{F}$, so that if $z=(\phi(x_i))_i$ eventually weakly $p$-alternates then $\bar{\phi}_z$ extends $(\phi(x_i))_i$ to a solution. 
   \end{proposition}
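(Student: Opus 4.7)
The plan is to adapt the $\mathbb{G}_0$-style gadget of Proposition~\ref{p:all} almost verbatim, replacing the symmetric $\mathbb{F}_2$-equation by an \emph{oriented} equation that tests whether $p$ consecutive distinguished variables sum to $1$. I keep the underlying space $A = \big([\N]^\N \times 2^\N\big) \sqcup \N$, the distinguished variables $(x_i)_{i\in\N}$, and a dense sequence $s_n \in 2^n$. For each $r \in [\N]^\N$, $i \in \N$, and $c \in 2^\N$, setting $u = s_i \concatt (0) \concatt c$ and $v = s_i \concatt (1) \concatt c$, I add to $\mathcal{A}_0^p$ the single equation
\[
x_{(r,u)} - x_{(r,v)} - \sum_{k=0}^{p-1} x_{pr(i)+k} = -1.
\]
Since $i$ must be the first position at which $u$ and $v$ differ and $s_i = u \upharpoonright i$ is then forced, each triple $(r,u,v)$ yields a single equation, so there is no redundancy.

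For the analogue of Claim~\ref{cl:firstno}, suppose $(\phi(x_i))_i$ does not eventually weakly $p$-alternate and let $r$ be an infinite set of indices $i$ with $\sum_{k=0}^{p-1}\phi(x_{pi+k}) \neq 1$. If $\bar\phi$ were a Borel extension, finiteness of $\mathbb{F}$ together with the Baire property would give some $j \in \mathbb{F}$ and $t \in 2^{<\N}$ such that $\bar\phi(x_{(r,\cdot)})^{-1}(j)$ is comeager in $N_t$. Choose $i$ with $t \sqsubseteq s_i$ by density, and run the standard $\mathbb{G}_0$ Baire-category argument to produce $c \in 2^\N$ with
\[
\bar\phi(x_{(r,s_i\concatt(0)\concatt c)}) \;=\; \bar\phi(x_{(r,s_i\concatt(1)\concatt c)}) \;=\; j.
\]
The displayed equation then forces $\sum_{k=0}^{p-1} \phi(x_{pr(i)+k}) = 1$, contradicting the choice of~$r$.

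For the analogue of Claim~\ref{cl:firstyes}, let $i_0$ witness eventual weak $p$-alternation and define the same graph $G$ on $2^{i_0}$ as before; it is acyclic by the same induction. For each $r \in [\N]^\N$, pick a root in every $G$-component, set $\psi_r = 0$ there, and propagate along the unique path to each remaining vertex by
\[
\psi_r(u) - \psi_r(v) = \sum_{k=0}^{p-1} \phi(x_{pr(n)+k}) - 1
\]
at every oriented $G$-edge at level $n < i_0$; acyclicity makes $\psi_r$ well-defined. Setting $\bar\phi(x_{(r,u)}) := \psi_r(u \upharpoonright i_0)$ produces a continuous extension: edges of level $n < i_0$ hold by construction, while for $n \geq i_0$ both $u \upharpoonright i_0$ and $v \upharpoonright i_0$ coincide, so $\bar\phi(x_{(r,u)}) = \bar\phi(x_{(r,v)})$, and the right-hand side vanishes because $r(n) \geq n \geq i_0$ forces $\sum_{k=0}^{p-1} \phi(x_{pr(n)+k}) = 1$. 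As in the original proof, $\psi_r$ depends only on finitely many coordinates of $r$ and of $z = (\phi(x_i))_i$, yielding the required uniform Borel map $\bar\phi$.

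The main obstacle is that the symmetric $\mathbb{F}_2$-equation has no symmetric analogue in characteristic $p > 2$: imposing both $x_{(r,u)} - x_{(r,v)} = \sum - 1$ and $x_{(r,v)} - x_{(r,u)} = \sum - 1$ would force $2(\sum-1) = 0$, which for odd $p$ collapses to $\sum = 1$ and is far too restrictive. The fix is to fix the orientation once and for all by decreeing that $u$ carries the bit $0$ at the first position of disagreement; once this choice is made, every other step of the proof of Proposition~\ref{p:all} transfers with only cosmetic changes.
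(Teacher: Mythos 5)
Your proof is correct, but it takes a genuinely different route from the paper's in the design of the gadget equations. The paper passes from the binary Cantor space $2^\N$ to the $p$-ary space $p^\N$, replaces edges of $\mathbb{G}_0$ by $p$-ary hyperedges $\{s_i\concatt(j)\concatt c : j<p\}$, and writes the symmetric equation $\sum_j x_{(r,u_j)} + \sum_j x_{pr(i)+j} = 1$; the point is that if all $p$ gadget variables take the same value then, by characteristic $p$, the first sum vanishes and $\sum_j x_{pr(i)+j}=1$ is forced. You instead stay with the binary tree and edges of $\mathbb{G}_0$, and break the symmetry by \emph{orienting} each edge and using subtraction: $x_{(r,u)} - x_{(r,v)} - \sum_k x_{pr(i)+k} = -1$, so that equality of the two gadget variables again forces $\sum_k x_{pr(i)+k}=1$. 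Both mechanisms deliver the same conditional collapse, so the Baire-category argument and the acyclicity-based extension go through in parallel. Your version has the advantage of only needing the standard binary $\mathbb{G}_0$ Baire argument and plain graph acyclicity (sidestepping the $p$-ary intersection argument and the paper's Subclaim~\ref{cl:solution} on Berge-acyclic hypergraph systems), and it keeps the relation arity at $p+2$ rather than $2p$; the cost is the slightly awkward need to fix an orientation on each edge, which you correctly note cannot be symmetrized when $p>2$. Both proofs are valid; yours is arguably the more economical generalization of the $\mathbb{F}_2$ gadget.
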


   \begin{proof}[Proof of Proposition \ref{p:allp}]
   	Fix a sequence $s_n \in p^n$ which is \emph{dense}, that is, for every $t \in p^{<\N}$ there exists an $n$ with $t \sqsubset s_n$. The underlying space of the systems of linear equations is going to be $[\N]^\N \times p^\N \sqcup \N$, and for $r \in [\N]^\N$, $u_j \in p^\N$ add the equation \begin{equation}\label{eqp}\sum_{0 \leq j \leq p-1} x_{(r,u_j)}+ \sum_{0 \leq j \leq p-1}x_{pr(i)+j}=1\end{equation} if $u_j=s_i \concatt (j) \concatt c$ for some $c \in p^\N$ and $0 \leq j \leq p-1$. By $\mathbb{F}$ having characteristic $p$, this is implies that  \[ (\forall j, j' \ x_{(r,u_j)} = x_{(r,u_{j'})})\implies   \sum_{0 \leq j \leq p-1}x_{pr(i)+j}=1.\]
   	
   	Using this equation, similarly to the proof of Claim \ref{cl:firstno} we get:
   	\begin{claim}
   		\label{cl:firstnop}
   		Assume that $\phi:\{x_i:i \in \N\} \to \mathbb{F}$ is a map so that there is an infinite set of $i$'s with $\sum_j \phi(x_{pi+j}) \neq 1$, i.e., $(\phi(x_{i}))_{i}$ is not weakly eventually $p$-alternating. Then $\phi$ does not extend to a Borel solution.  
   	\end{claim}
   	
   	Finally, let us check the analogue of Claim \ref{cl:firstyes}. This is essentially the only place where something slightly different is needed. 
   	
   	\begin{claim}
   		\label{cl:firstyesp} Assume that $\phi:\{x_i:i \in \N\} \to \mathbb{F}$ is a map so that $(\phi(x_i))_{i \in \N}$ eventually weakly alternates. Then $\phi$ extends to a Borel solution of $\mathcal{A}^p_0$. 
   	\end{claim}
   	\begin{proof}
   		Let $i_0$ be such that $\sum_{0 \leq j \leq p-1}x_{pr(i)+j}=1$ for all $i \geq i_0$. Define a hypergraph $G$ on $p^{i_0}$ so that $(u_j)_{j \leq p-1}$ forms a hyperedge if there is an $n$ with $u_j=s_n \concatt (j) \concatt c$ with $c \in p^{i_0-n-1}$. An easy induction shows that this hypergraph is Berge acyclic as well, i.e., its incidence graph is acyclic.
        
   		\begin{subclaim}
   			\label{cl:solution}
   			Assume that $A$ is a finite system of linear equations over $\mathbb{F}$ so that the hypergraph on the variables where the edges correspond to the equations is acyclic. Then $A$ is solvable.   
   		\end{subclaim}
   		
   		\begin{proof}[Proof of Subclaim \ref{cl:solution}] Starting from any equation, set values to the variables one-by-one, continuing with any equation which has both set and unset values if there is such. 
   		\end{proof}	
   		
   		Using this, we can define a map $\psi_r: p^{i_0} \to \mathbb{F}$ so that if $u_j=s_i\concatt (j) \concatt c$ then 
   		\[\sum_{0 \leq j \leq p-1} \psi_r(u_j)+ \sum_{0 \leq j \leq p-1}\phi(x_{pr(i)+j})=1\]
        for every $i\le i_0$.
   		Finally, for $u \in p^\N$ letting $\bar{\phi}(x_{(r,u)})=\psi_r(\restriction{u}{i_0})$ works as before. 
   	\end{proof}
    
   	This finishes the construction of our gadget and the proof of Proposition \ref{p:allp}
   \end{proof}
   
   Using Proposition \ref{p:allp} the same way as Proposition \ref{p:all} has been used, yields Theorem \ref{t:reductionp}. Finally, combining this with Theorem \ref{t:hypercomplexp} and Proposition \ref{p:prel} gives the main result:
   \begin{corollary}[Theorem \ref{t:main}] $CSP_{B}(\mathbb{F})$ is uniformly $\mathbf{\Sigma}^1_2$-complete.  
   \end{corollary}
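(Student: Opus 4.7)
The plan is to extract the statement as a direct consequence of the machinery assembled in the previous sections. Concretely, I would argue that $CSP_B(\mathbb{F})$ lies in $\mathbf{\Sigma}^1_2$ and is uniformly $\mathbf{\Sigma}^1_2$-hard, and these together give the conclusion.

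For the upper bound, I would invoke Proposition \ref{pr:homo}. Since a system of linear equations over $\mathbb{F}$ can be encoded as a Borel homomorphism problem to a fixed finite structure on $\mathbb{F}$ (as outlined in the introduction), and Proposition \ref{pr:homo} gives that $CSP_B(\mc{H})$ is $\mathbf{\Sigma}^1_2$ for any Borel structure $\mc{H}$, the upper bound is immediate. No calculation is needed here beyond pointing to the coding convention already fixed.

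For the lower bound, I would chain together the hypergraph hardness result with the gadget reduction. By Theorem \ref{t:hypercomplexp}, $CSP_B(H^p_0)$ is uniformly $\mathbf{\Sigma}^1_2$-hard. By Theorem \ref{t:reductionp}, $CSP_B(H^p_0)$ uniformly Borel reduces to $CSP_B(\mathbb{F})$. Plugging these two facts into Proposition \ref{p:prel} yields that $CSP_B(\mathbb{F})$ is itself uniformly $\mathbf{\Sigma}^1_2$-hard.

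There is no real obstacle at this stage, since all the substantive work has already been carried out: the complexity transfer via non-dominating sets lives inside Theorem \ref{t:complexity} and its application in Theorem \ref{t:hypercomplexp}, and the combinatorial heart of the argument — the $\mathbb{G}_0$-style infinite gadget that forces eventual weak $p$-alternation — is packaged into Proposition \ref{p:allp} and applied in Theorem \ref{t:reductionp}. The only thing to verify when writing this out is that the two uniform reductions compose into a single uniform reduction, which is clear from the definition of uniform Borel reducibility and the fact that composing Borel maps on parameter spaces remains Borel. Hence one sentence, citing Theorem \ref{t:hypercomplexp}, Theorem \ref{t:reductionp}, Proposition \ref{p:prel} and Proposition \ref{pr:homo}, suffices as the proof.
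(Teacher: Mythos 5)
Your argument matches the paper's own: the paper derives the corollary by combining Theorem \ref{t:reductionp} with Theorem \ref{t:hypercomplexp} and Proposition \ref{p:prel} for hardness, with membership in $\mathbf{\Sigma}^1_2$ coming from Proposition \ref{pr:homo}. You are only slightly more explicit than the paper in spelling out the upper bound, but the decomposition and citations are identical.
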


   \section{Further problems and observations}
   \label{s:problems}
   	Homomorphism problems that do not reduce systems of linear equations are also an important class in the theory of CSPs: they coincide with the class of so called \emph{bounded width} CSPs, and they are solvable by relatively simple algorithms, called datalog programs (see, e.g., \cite{barto2014constraint} or \cite{brady2022notes}). It follows from Thornton's work and our main theorem that homomorphism problems which are not bounded width are necessarily $\mathbf{\Sigma}^1_2$-complete in the Borel case. 
   	
   	Let us reiterate a possible scenario suggested in Thornton's work, namely, that bounded width is exactly the boundary between easy and hard problems in the Borel case.
   	
   	\begin{conjecture}
   		A homomorphism problem is $\mathbf{\Pi}^1_1$ iff it is not $\mathbf{\Sigma}^1_2$-complete iff it is bounded width. 
   	\end{conjecture}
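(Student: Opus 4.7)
The conjecture comprises three equivalences, one of which is essentially immediate: if a set is $\mathbf{\Pi}^1_1$ it cannot be $\mathbf{\Sigma}^1_2$-complete, because $\mathbf{\Pi}^1_1 \subsetneq \mathbf{\Sigma}^1_2$. The plan is thus to attack the two substantive implications separately: (a) \emph{not bounded width} $\Rightarrow$ $\mathbf{\Sigma}^1_2$-complete, and (b) \emph{bounded width} $\Rightarrow$ $\mathbf{\Pi}^1_1$.

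For (a), I would invoke the algebraic theory of CSPs. By the Barto--Kozik theorem, a finite structure $H$ is not bounded width if and only if $H$ primitive-positively constructs (pp-constructs) a nontrivial module over some $\mathbb{F}_p$, and in particular the linear-equation structure considered in Theorem~\ref{t:main}. Since pp-constructions are built from first-order definable relations and quotients by pp-definable equivalence relations, they should lift to uniform Borel reductions of $CSP_B$, in the spirit of Thornton's reduction from the NP-hard case. Combined with Theorem~\ref{t:main} and Proposition~\ref{p:prel}, this would yield that $CSP_B(H)$ is uniformly $\mathbf{\Sigma}^1_2$-complete. The main technical task is to check that pp-constructions, in particular quotients, are implementable by Borel maps in a uniform manner when the input is a Borel structure; descriptive set-theoretic selection for pp-definable equivalence relations is a plausible subtlety, but should be tractable since the relations involved have finite index.

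For (b), the natural approach is to emulate the classical datalog/$k$-consistency algorithm in the Borel category. Concretely: associate to each Borel instance $\mathcal{G}$ its $k$-consistency operator, which monotonically restricts the allowed list of potential values at each tuple of variables by discarding tuples incompatible with some constraint; iterate this operator transfinitely. Since the operator is monotone and Borel, the fixed point is reached in countably many stages, and both "the fixed point is nonempty" and the fixed point itself are computable by Borel means, with nonemptiness being at worst $\mathbf{\Pi}^1_1$. One implication---existence of a Borel homomorphism implies nonemptiness of the fixed point---is immediate. The conclusion would then follow from the converse: for bounded width $H$, nonemptiness of the $k$-consistency fixed point yields a genuine Borel homomorphism.

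The last step is the main obstacle. In the classical (finite) setting the WNU and cyclic polymorphisms of bounded width $H$ are exactly what allow one to glue locally consistent assignments into a global one; in the Borel setting one would need a measurable selection/uniformization theorem that uses these polymorphisms to combine Borel partial solutions on small Borel pieces into a global Borel solution. For $H=K_2$ this is the classical equivalence between Borel $2$-colorability and the absence of odd Borel cycles, but I expect that even the next bounded width cases (e.g., CSPs with a majority polymorphism, or Horn-SAT) will require genuinely new descriptive-combinatorial tools, and that understanding which polymorphism-theoretic identities survive the passage from finite local gluings to Borel global ones is where the bulk of the work will lie.
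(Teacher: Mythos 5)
The statement you are addressing is labeled a \emph{conjecture} in the paper, and the paper offers no proof of it. So there is no argument of theirs to compare yours against: the authors explicitly leave it open, and in the surrounding discussion they only assert the one direction that does follow from known results. What you have written is, by your own account, a research program rather than a proof, so the relevant question is whether your outline matches where the real difficulty lies.

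On the direction ``not bounded width implies $\mathbf{\Sigma}^1_2$-complete,'' your sketch via Barto--Kozik (not bounded width $\Rightarrow$ pp-constructs a linear-equation template) plus a Borel lifting of pp-constructions is indeed the intended route, and the paper records in the paragraph immediately preceding the conjecture that this direction already follows from Thornton's work together with Theorem~\ref{t:main}. So that part of your proposal recapitulates something the authors regard as settled; it is fine, but it is not the content of the conjecture.

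The substance of the conjecture is the converse: bounded width implies $CSP_B(H)$ is $\mathbf{\Pi}^1_1$. Here your $k$-consistency sketch correctly identifies the shape of a putative algorithm, but the step you flag as ``the main obstacle''---that nonemptiness of a Borel $k$-consistency fixed point yields a genuine Borel homomorphism---is precisely the open problem, and nothing in your outline closes it. It is worth stressing that the classical argument really does break here: local consistency plus finite gluing uses compactness in an essential way, and this paper's entire point is that compactness fails badly in the Borel setting (the gadget of Proposition~\ref{p:all} exploits exactly such a failure). Even your proposed benchmark case, $H=K_2$, is only known to be $\mathbf{\Pi}^1_1$ via quite different means (detecting odd Borel ``cycles'' in a suitable sense, cf.\ the cited result), not via a consistency iteration; and for Horn-SAT or majority polymorphisms there is, as far as this paper records, no Borel analogue at all. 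So the proposal does not constitute a proof; it is an honest account of a plausible strategy together with a correct identification of where new descriptive-combinatorial ideas would be required.
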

   
   	Since polymorphisms play a crucial role in the theory of CSPs, it is to be expected that they will play such a role in the Borel context. Recall that a \emph{polymorphism} of $H$ is just a homomorphism $H^n \to H$, where $H^n$ is the categorical power of $H$. Clearly, projections are always polymorphisms, and, roughly speaking, the CSP Dichotomy says that the homomorphism problem to $H$ is in $P$, once $H$ admits a non-trivial polymorphism, and otherwise it is NP-complete. 
   	
   	In order to Borelize this notion, it is natural to consider \emph{Borel polymorphisms} from $H^\N \to H$. A first attempt in defining a non-triviality notion for Borel polymorphisms could be as follows: call a Borel polymorphism \emph{non-trivial} if it is not continuous. We have the following observation.

   	\begin{proposition}
   		\begin{itemize}
   			\item There is no non-trivial Borel polymorphism $K_3^\N \to K_3$.
   			\item There is no non-trivial Borel polymorphism $\mathbb{F}_2^\N \to \mathbb{F}_2$.
   			\item There is a non-trivial Borel polymorphism $K_2^\N \to K_2$.
   		\end{itemize}
   	\end{proposition}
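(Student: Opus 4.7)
The plan is to handle the three bullets in order of difficulty.

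For the third bullet, a polymorphism $K_2^{\mathbb{N}} \to K_2$ is simply a Borel selector for the free involution $\sigma(x) = \mathbf{1} \oplus x$ on $2^{\mathbb{N}}$, since the only edges of $K_2^{\mathbb{N}}$ are pairs $(x, \sigma(x))$. I would construct a non-clopen such selector explicitly: take the clopen selector $A_0 = \{x : x_0 = 0\}$, pick a Borel $E \subseteq A_0$ which is not clopen (for instance $E = \{x \in A_0 : x \text{ is eventually } 0\}$, which is $F_\sigma$ but not open), and set $A = (A_0 \setminus E) \cup \sigma(E)$. A direct computation shows $A \cup \sigma(A) = 2^{\mathbb{N}}$ and $A \cap \sigma(A) = \emptyset$, so $\chi_A$ is a Borel polymorphism; and since $A$ differs from the clopen $A_0$ on the non-clopen set $E \cup \sigma(E)$, $\chi_A$ is not continuous.

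For the second bullet, I would unfold the polymorphism condition on the ternary relation $R_0 = \{(a,b,c) \in \mathbb{F}_2^3 : a+b+c=0\}$. In the categorical power $\mathbb{F}_2^{\mathbb{N}}$, the triples $(x, y, z)$ with $x_n + y_n + z_n = 0$ for every $n$ are precisely those with $z = x + y$ pointwise, so any polymorphism $f$ satisfies $f(x + y) = f(x) + f(y)$ for all $x,y \in 2^{\mathbb{N}}$. Thus $f$ is a Borel group homomorphism from the Polish group $(2^{\mathbb{N}}, +)$ to $(\mathbb{F}_2, +)$, and the classical Banach--Pettis theorem (every Baire-measurable homomorphism between Polish groups is continuous) gives the result.

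For the first bullet, let $f : K_3^{\mathbb{N}} \to K_3$ be a Borel polymorphism; the goal is to show $f$ is continuous, hence a projection. For every $c \in \{1, 2\}^{\mathbb{N}}$, translation $T_c : x \mapsto x + c$ in $(\mathbb{Z}/3)^{\mathbb{N}}$ is a Borel graph automorphism of $K_3^{\mathbb{N}}$ sending each vertex to a neighbour, so $\phi_c(x) := f(T_c x) - f(x) \pmod{3}$ is a Borel map $3^{\mathbb{N}} \to \{1, 2\}$. Since $T_c^3 = \mathrm{id}$, we get $\phi_c(x) + \phi_c(T_c x) + \phi_c(T_c^2 x) \equiv 0 \pmod{3}$; as every summand is in $\{1, 2\}$, the sum must equal $3$ or $6$, forcing the three values to coincide. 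Hence $\phi_c$ is constant on $T_c$-orbits and $2$-valued. Combined with the analogous $3$-term identity for every $c$ and the self-cocycle $\phi_{2c}(x) = \phi_c(x) + \phi_c(T_c x)$, one uses the interconnection of these cocycles across translations in the Polish group $(\mathbb{Z}/3)^{\mathbb{N}}$ to rewrite $f$ as a Borel homomorphism between suitable Polish groups, which is then continuous by another appeal to Banach--Pettis. Continuity of $f$ forces it to depend on only finitely many coordinates, and the textbook fact that any finitary polymorphism $K_3^n \to K_3$ is a projection finishes the proof.

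The main obstacle I anticipate in the first bullet is extracting a genuine Polish-group homomorphism from the cocycle data $\{\phi_c\}_{c \in \{1,2\}^{\mathbb{N}}}$. Unlike the $\mathbb{F}_2$ case, where the ternary linear relation directly produces a group homomorphism, here the polymorphism only controls differences $f(x + c) - f(x)$ for $c \in \{1, 2\}^{\mathbb{N}}$; crucially, $\{1, 2\}^{\mathbb{N}}$ is not a subgroup of $(\mathbb{Z}/3)^{\mathbb{N}}$, and the sum of two distinct such elements always has a zero coordinate, so the naive composition of cocycles across different generators is obstructed. One therefore needs a careful combinatorial argument---perhaps extending $\phi_c$ consistently to all $c \in (\mathbb{Z}/3)^{\mathbb{N}}$ via iterated adjacencies, or running a Baire-category argument directly on the $\mathbb{Z}/2$-valued orbit-invariant $\phi_\mathbf{1}$---before automatic continuity can be invoked.
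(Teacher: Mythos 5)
Bullets two and three are correct. For the third, your explicit Borel, non-clopen transversal for the flip involution on $2^{\mathbb{N}}$ works; since clopen subsets of $2^{\mathbb{N}}$ depend on finitely many coordinates, ``not clopen'' does coincide with ``not continuous'' here, so the verification is complete. For the second, reducing to additivity via the ternary relation and then invoking Pettis's automatic continuity theorem for Baire-measurable homomorphisms of Polish groups is a clean and complete argument.

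The first bullet has a genuine gap, which you yourself flag. The cocycles $\phi_c$ are a reasonable start, but the route you sketch --- assembling them into a Polish-group homomorphism and invoking Pettis again --- runs into precisely the obstruction you identify: $\{1,2\}^{\mathbb{N}}$ is not a subgroup of $(\mathbb{Z}/3)^{\mathbb{N}}$, and ``interconnection of cocycles across translations'' is not yet an argument. The missing idea is that each $\phi_c$ is \emph{globally} constant, not merely constant on $T_c$-orbits. This follows from two elementary facts: (i) for every pairwise-adjacent triple $\{x,y,z\}$ in $K_3^{\mathbb{N}}$ (i.e.\ $\{x_n,y_n,z_n\}=\{0,1,2\}$ for all $n$), the values $f(x),f(y),f(z)$ are pairwise distinct, and since $\{x+c,y+c,z+c\}$ is again such a triple, one gets $\phi_c(x)+\phi_c(y)+\phi_c(z)\equiv 0 \pmod 3$ with each summand in $\{1,2\}$, forcing the three values to coincide --- and this applies to \emph{every} triangle, not just the $T_c$-orbit triangles; (ii) $K_3^{\mathbb{N}}$ is triangle-connected (it has diameter at most $2$ and every edge extends to a triangle), so constancy on each triangle propagates globally. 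Hence $\phi_c\equiv d(c)$ for a Borel map $d\colon\{1,2\}^{\mathbb{N}}\to\{1,2\}$, which is moreover a $2$-coloring of the coordinatewise flip (taking $x,x+c,x+c'$ with $c,c'$ disagreeing everywhere forces $d(c)\neq d(c')$). A finitary analysis of the well-defined constants $\Delta_n := d(\mathrm{flip}_n\,c)-d(c)$, using only that $d$ takes values in $\{1,2\}$, shows that at most one $\Delta_n$ is nonzero, in which case $d$ depends on that single coordinate; the remaining case (all $\Delta_n=0$) makes $d$ invariant under finite flips and so contradicts the $2$-coloring property by generic ergodicity of $E_0$. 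That pins $d$, and hence $f$, down to a continuous projection. Note that this is structurally quite different from what you proposed: Baire category enters only once, at the very end, to rule out a degenerate case, rather than through automatic continuity of group homomorphisms.
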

   	
   	Hence, it seems that such a notion (or a similar one) might be a good explanation for the complexity phenomenon discussed in this paper. Thus, the next problem is rather natural (see also \cite{chen2024clones} for related recent results).
   	
   	\begin{problem}
   		Develop the theory of Borel polymorphisms.
   	\end{problem}

	\bibliographystyle{abbrv}
	\bibliography{ref.bib}
	
\end{document}